\documentclass[12pt,a4paper]{amsart}
\makeatletter
\renewcommand\normalsize{%
    \@setfontsize\normalsize{11.7}{14pt plus .3pt minus .3pt}%
    \abovedisplayskip 10\p@ \@plus4\p@ \@minus4\p@
    \abovedisplayshortskip 6\p@ \@plus2\p@
    \belowdisplayshortskip 6\p@ \@plus2\p@
    \belowdisplayskip \abovedisplayskip}
\renewcommand\small{%
    \@setfontsize\small{9.5}{12\p@ plus .2\p@ minus .2\p@}%
    \abovedisplayskip 8.5\p@ \@plus4\p@ \@minus1\p@
    \belowdisplayskip \abovedisplayskip
    \abovedisplayshortskip \abovedisplayskip
    \belowdisplayshortskip \abovedisplayskip}
\renewcommand\footnotesize{%
    \@setfontsize\footnotesize{8.5}{9.25\p@ plus .1pt minus .1pt}
    \abovedisplayskip 6\p@ \@plus4\p@ \@minus1\p@
    \belowdisplayskip \abovedisplayskip
    \abovedisplayshortskip \abovedisplayskip
    \belowdisplayshortskip \abovedisplayskip}
\setlength\parindent    {30\p@}
\setlength\textwidth    {412\p@}
\setlength\textheight   {570\p@}
\paperwidth=210mm
\paperheight=260mm
\ifdefined\pdfpagewidth
\setlength{\pdfpagewidth}{\paperwidth}
\setlength{\pdfpageheight}{\paperheight}
\else
\setlength{\pagewidth}{\paperwidth}
\setlength{\pageheight}{\paperheight}
\fi
\calclayout
\makeatother

\usepackage{amsthm,amssymb}
\usepackage[T1]{fontenc}
\usepackage{lmodern}
\usepackage[utf8]{inputenc}     
\usepackage{xcolor}
\usepackage{enumerate}
\usepackage[
  pdftitle={C*-Rigidity of bounded geometry metric spaces},
  pdfauthor={Diego Mart\'{i}nez, Federico Vigolo},
  pdfsubject={Mathematics},
  colorlinks=true, linkcolor=blue, linkbordercolor=blue, citecolor=red, citebordercolor=red, urlcolor=blue, linktocpage=true
]{hyperref}
\usepackage[capitalise, nameinlink, noabbrev, nosort]{cleveref}
\usepackage[lite,initials]{amsrefs}

\usepackage{mathtools}          
\usepackage{microtype}          
\usepackage{bbm}          
\usepackage{bm}           
\usepackage[shortcuts]{extdash}       
\usepackage{accents}
\usepackage{xspace}

\usepackage{tikz-cd}


\usepackage[inline]{enumitem}
\setlist[enumerate,1]{font=\normalfont}
\crefname{enumi}{}{} 
\crefname{enumi}{}{} 


\newcommand{\C}{\mathbb{C}}

\makeatletter
\def\appmap{\@ifnextchar[{\@withappmap}{\@withoutappmap}}
  \def\@withappmap[#1]#2#3#4{{{f}^{#1}_{#2, #3, #4}}}
  \def\@withoutappmap#1#2#3{{{f}_{#1, #2, #3}}}
\def\cappmap{\@ifnextchar[{\@withcappmap}{\@withoutcappmap}}
  \def\@withcappmap[#1]#2#3#4{{\crse{f}^{#1}_{#2, #3, #4}}}
  \def\@withoutcappmap#1#2#3{{\crse{f}_{#1, #2, #3}}}
\def\crseimg{\@ifnextchar[{\@withcrseimg}{\@withoutcrseimg}}
  \def\@withcrseimg[#1]#2#3{{I^{#1}_{#2, #3}}}
  \def\@withoutcrseimg#1#2{{I_{#1, #2}}}
\makeatother

\newcommand*{\cstar}{\texorpdfstring{$C^*$\nobreakdash-\hspace{0pt}}{*-}}

\newcommand*{\uroecstar}[1]{C^*_{\rm u}{\left(#1\right)}}

\makeatletter
  \def\roeclike{\@ifnextchar[{\@withroeclike}{\@withoutroeclike}}
    \def\@withroeclike[#1]#2{\CR^*_{#1}{\left(#2\right)}}
    \def\@withoutroeclike#1{\CR^*{\left(#1\right)}}
  \def\roeclikeone{\@ifnextchar[{\@withroeclikeone}{\@withoutroeclikeone}}
    \def\@withroeclikeone[#1]#2{\CR^*_{#1}{\left(#2\right)}}
    \def\@withoutroeclikeone#1{\CR^*_1{\left(#1\right)}}
  \def\roecliketwo{\@ifnextchar[{\@withroecliketwo}{\@withoutroecliketwo}}
    \def\@withroecliketwo[#1]#2{\CR^*_{#1}{\left(#2\right)}}
    \def\@withoutroecliketwo#1{\CR^*_2{\left(#1\right)}}
  \def\roecstar{\@ifnextchar[{\@withroecstar}{\@withoutroecstar}}
    \def\@withroecstar[#1]#2{C^*_{#1,\rm Roe}{\left(#2\right)}}
    \def\@withoutroecstar#1{C^*_{\rm Roe}{\left(#1\right)}}
  \def\roestar{\@ifnextchar[{\@withroestar}{\@withoutroestar}}
    \def\@withroestar[#1]#2{\C_{#1,\rm Roe}{\left[#2\right]}}
    \def\@withoutroestar#1{\C_{\rm Roe}{\left[#1\right]}}
  \def\classicalroecstar{\@ifnextchar[{\@withclassicalroecstar}{\@withoutclassicalroecstar}}
    \def\@withclassicalroecstar[#1]#2{C^*{\left(#2\right)}}
    \def\@withoutclassicalroecstar#1{C^*{\left(#1\right)}}
  \def\cpcstar{\@ifnextchar[{\@withcpcstar}{\@withoutcpcstar}}
    \def\@withcpcstar[#1]#2{C^*_{#1,\rm cp}{\left(#2\right)}}
    \def\@withoutcpcstar#1{C^*_{\rm cp}{\left(#1\right)}}
  \def\cpstar{\@ifnextchar[{\@withcpstar}{\@withoutcpstar}}
    \def\@withcpstar[#1]#2{\C_{#1,\rm cp}{\left[#2\right]}}
    \def\@withoutcpstar#1{\C_{\rm cp}{\left[#1\right]}}
  \def\qlcstar{\@ifnextchar[{\@withqlcstar}{\@withoutqlcstar}}
    \def\@withqlcstar[#1]#2{C^*_{#1,\rm ql}{\left(#2\right)}}
    \def\@withoutqlcstar#1{C^*_{\rm ql}{\left(#1\right)}}
\makeatother

\newcommand*{\coe}[1]{{\rm CE}{\left(#1\right)}}

\newcommand*{\charfunc}[1]{\mathbbm{1}_{#1}}                        \newcommand*{\chf}[1]{\charfunc{#1}}
\newcommand*{\charfunccomp}[2]{\charfunc{#1 \smallsetminus #2}}     
\newcommand*{\charfunccompX}[1]{\charfunccomp{X}{#1}}               \newcommand*{\chfcX}[1]{\charfunccompX{#1}}

\newcommand*{\matrixunit}[2]{e_{#1,#2}}

\definecolor{newpurple}{rgb}{0.8, 0, 0.9}



        \newcommand{\NN}{\mathbb{N}}

       \newcommand{\CB}{\mathcal{B}}

       \newcommand{\CH}{\mathcal{H}}
       
\newcommand{\CK}{\mathcal{K}}       
\newcommand{\CM}{\mathcal{M}}       
       
       \newcommand{\CR}{\mathcal{R}}










\DeclarePairedDelimiter\abs{\lvert}{\rvert}     
\DeclarePairedDelimiter\norm{\lVert}{\rVert}        
\DeclarePairedDelimiter\angles{\langle}{\rangle}    

\DeclarePairedDelimiter\paren{(}{)}         
\DeclarePairedDelimiter\braces{\{}{\}}          

   \newcommand{\bigparen}[1]{\paren[\big]{#1}}


\newcommand{\scal}[2]{\angles{#1,#2}}           



\DeclareMathOperator{\id}{id}               



\DeclareMathOperator{\aut}{Aut}             
\DeclareMathOperator{\out}{Out}             

\DeclareMathOperator{\diam}{diam}           





\theoremstyle{plain}
\newtheorem{thm}{Theorem}[section]      \newtheorem{theorem}[thm]{Theorem}
     \newtheorem{proposition}[thm]{Proposition}
            \newtheorem{lemma}[thm]{Lemma}

\newtheorem{claim}[thm]{Claim}

\newtheorem*{thm*}{Theorem}         \newtheorem*{theorem*}{Theorem}
\newtheorem*{prop*}{Proposition}        \newtheorem*{proposition*}{Proposition}
\newtheorem*{lem*}{Lemma}           \newtheorem*{lemma*}{Lemma}
\newtheorem*{cor*}{Corollary}           \newtheorem*{corollary*}{Corollary}
\newtheorem*{qu*}{Question}         \newtheorem*{question*}{Question}
\newtheorem*{conj*}{Conjecture}         \newtheorem*{conjecture*}{Question}
\newtheorem*{fact*}{Fact}
\newtheorem*{claim*}{Claim}
\newtheorem*{case*}{Case}
\newtheorem*{problem*}{Problem}

\numberwithin{equation}{section}

\newtheorem{alphthm}{Theorem}           


\theoremstyle{definition}
        \newtheorem{definition}[thm]{Definition}

      \newtheorem{convention}[thm]{Convention}

\newtheorem*{de*}{Definition}           \newtheorem{definition*}{Definition}
\newtheorem*{notation*}{Notation}
\newtheorem*{conv*}{Convention}         \newtheorem*{convention*}{Convention}

\theoremstyle{remark}
           \newtheorem{remark}[thm]{Remark}

\newtheorem*{remark*}{Remark}

\DeclareMathAlphabet{\mathbit}{OT1}{cmr}{bx}{it}

\crefname{thm}{Theorem}{Theorems}              \crefname{theorem}{Theorem}{Theorems}
\crefname{prop}{Proposition}{Propositions}     \crefname{proposition}{Proposition}{Propositions}
\crefname{lem}{Lemma}{Lemmas}                  \crefname{lemma}{Lemma}{Lemmas}
\crefname{rmk}{Remark}{Remarks}                \crefname{remark}{Remark}{Remarks}
\crefname{cor}{Corollary}{Corollaries}         \crefname{corollary}{Corollary}{Corollaries}
\crefname{qu}{Question}{Questions}             \crefname{question}{Question}{Questions}
\crefname{conj}{Conjecture}{Conjectures}       \crefname{conjecture}{Conjecture}{Conjectures}
\crefname{fact}{Fact}{Facts}
\crefname{claim}{Claim}{Claims}
\crefname{case}{Case}{Cases}
\crefname{alphthm}{Theorem}{Theorems}          \crefname{alphcor}{Corollary}{Corollaries}
\crefname{alphprop}{Proposition}{Propositions}

\newcommand{\Ad}{{\rm Ad}}

\newcommand{\mhyphen}{\textnormal{-}}
\newcommand{\variable}{\,\mhyphen\,}

\newcommand*{\munit}[2]{\matrixunit{#1}{#2}}

\newcommand*{\ops}[1]{\CB(\ell^2(#1;\CH))}
\newcommand*{\elltwo}[1]{\ell^2(#1;\CH)}
\newcommand*{\Prop}[1]{{\rm Prop}(#1)}


\begin{document}

\title[C*-rigidity of bounded geometry metric spaces]{C*-rigidity of bounded geometry metric spaces}
\date{\today}

\author[Diego Mart\'{i}nez]{Diego Mart\'{i}nez $^{1}$}
\address{Department of Mathematics, KU Leuven, Celestijnenlaan 200B, 3001 Leuven, Belgium.}
\email{diego.martinez@kuleuven.be}

\author[Federico Vigolo]{Federico Vigolo $^{2}$}
\address{Mathematisches Institut, Georg-August-Universit\"{a}t G\"{o}ttingen, Bunsenstr. 3-5, 37073 G\"{o}ttingen, Germany.}
\email{federico.vigolo@uni-goettingen.de}

\begin{abstract}
  We prove that uniformly locally finite metric spaces with isomorphic Roe algebras must be coarsely equivalent. As an application, we also prove that the outer automorphism group of the Roe algebra of a metric space of bounded geometry is canonically isomorphic to the group of coarse equivalences of the space up to closeness.
\end{abstract}

\subjclass[2020]{53C24, 48L89, 51F30, 52C25, 51K05}
%
%

\keywords{Roe algebras; rigidity; coarse geometry}

\thanks{{$^{1}$} Funded by the Deutsche Forschungsgemeinschaft (DFG, German Research Foundation) under Germany’s Excellence Strategy – EXC 2044 – 390685587, Mathematics Münster – Dynamics – Geometry – Structure; the Deutsche Forschungsgemeinschaft (DFG, German Research Foundation) – Project-ID 427320536 – SFB 1442, and ERC Advanced Grant 834267 - AMAREC}

\thanks{{$^{2}$}  Funded by the Deutsche Forschungsgemeinschaft (DFG) -- GRK 2491:  Fourier Analysis and Spectral Theory -- Project-ID 398436923}

\maketitle

\section{Introduction}

The problem of \cstar rigidity lies at the interface between two seemingly unrelated worlds. Namely, that of \cstar algebras and that of coarse geometry.
Starting from the latter, \emph{coarse geometry} is the paradigm of studying (metric) spaces by ignoring their ``local'' properties and only investigating their ``large-scale'' geometric features.

More formally, a map \(f \colon X \to Y\) between two metric spaces is \emph{controlled} if for every \(r \geq 0\) there is some \(R \geq 0\) such that for every pair \(x_1, x_2 \in X\) with \(d(x_1, x_2) \leq r\), the images satisfy \(d(f(x_1), f(x_2)) \leq R\).
Two functions $f_1,f_2\colon X \to Y$ are \emph{close} if \(\sup_{x \in X} d(f_1(x), f_2(x)) < \infty\), and two metric spaces $X$ and $Y$ are \emph{coarsely equivalent} if there exist controlled maps \(f \colon X \to Y\) and \(g \colon Y \to X\) such that \(f \circ g\) is close to \(\id_Y\) and \(g \circ f\) is close to \(\id_X\).
The \emph{coarse geometric} properties of a metric space are those properties that are preserved under coarse equivalence.
A prototypical example of coarse equivalence is given by the inclusion $\mathbb  Z\hookrightarrow \mathbb R$ or, more generally, well-behaved discretizations of continuous spaces.

At first sight, coarse equivalence is an extremely weak notion. However, if the space is equipped with additional structure, such as a group action, it is often possible to extract an impressive amount of information from its large scale geometry.
In fact, geometric group theory shows that the coarse geometric setup provides the correct framework to conflate between groups and spaces. The power of this point of view and the breadth of its applications can be easily inferred from any of the numerous books on the subject \cites{delaHarpe2000topics,dructu2018geometric,gromov-1993-invariants,roe_lectures_2003,bridson2013metric}.

On the operator-algebraic side, the main character is the \emph{Roe algebra} $\roecstar X$. Its origin comes from differential geometry, and can be traced back to \cites{RoeIndexI,roe-1993-coarse-cohom}, where Roe used the $K$-theory of such \cstar{}algebras as receptacles for higher indices of differential operators on Riemannian manifolds.
It was then shown that the $K$-theory of $\roecstar X$ can be related with a coarse $K$-homology of $X$ via a certain assembly map. The study of this map is a subject of prime importance, as it can be used, for instance, as a tool to uncover deep interplays between topological and analytical properties of manifolds and prove the Novikov Conjecture \cites{aparicio_baum-connes_2019,higson-roe-1995-coarse-bc,yu1995coarse,yu_coarse_2000,skandalis-tu-yu-coarse-gpds-02,yu-1998-novikov-groups,roe-1993-coarse-cohom,yu_localization_1997}. More recently, Roe algebras have also been proposed to model topological phases in mathematical physics \cite{ewert2019coarse}.

Besides the Roe algebra $\roecstar X$, other Roe-like algebras such as the \emph{uniform Roe algebra} $\uroecstar{X}$ and the \cstar{}algebra of operators of controlled propagation $\cpcstar{X}$ (also known as ``band-dominated operators'') also found a solid place in the mathematical landscape, and have been recognized to be algebras of operators worthy of being studied in their own right \cites{roe_lectures_2003,braga_rigid_unif_roe_2022,li-khukhro-vigolo-zhang-2021,guentner-et-al-2012-geometric-complex,willett-2009-some-notes-prop-a,sako-2014-proper-a-oper-norm}.
We refer to \cref*{sec: prelims} for definitions.

The existence of bridges between the operator algebraic and coarse geometric worlds has been known for a very long time.
It was observed very early on that coarsely equivalent metric spaces always have isomorphic Roe algebras \cites{roe-1993-coarse-cohom,higson-roe-yu-1993-mayer-vietoris}. The \emph{\cstar{}rigidity problem} asks whether the converse is also true.
This fundamental problem and its counterparts dealing with other Roe-like algebras have been studied extensively. After the pioneering work \cite{spakula_rigidity_2013}, a sequence of papers gradually improved the state of the art by proving \cstar{}rigidity in more and more general settings \cites{braga2021uniform,braga2020embeddings,braga_farah_vignati_2022,braga_gelfand_duality_2022,braga_farah_rig_2021,spakula_maximal_2013,braga2020coarse,li-spakula-zhang-2023-measured-asym-exp,jiang2023rigidity}, with a final breakthrough obtained in \cite{braga_rigid_unif_roe_2022}, where the \cstar{}rigidity problem is solved for uniform Roe algebras of uniformly locally finite spaces. Related work also includes \cites{white_cartan_2018,bbfvw_2023_embeddings_vna,chung2018rigidity,baudier2023coarse,braga2024operator}.

The main contribution of this work is the complete solution of the \cstar{}rigidity problem for bounded geometry metric spaces.

\begin{alphthm}\label{thm:intro:rigidity}
    Let \(X\) and \(Y\) be uniformly locally finite metric spaces.
    If $\roecstar X\cong\roecstar Y$, then $X$ and $Y$ are coarsely equivalent. Moreover, the same holds if $\roecstar\variable$ is replaced with $\uroecstar\variable$ or $\cpcstar\variable$.
\end{alphthm}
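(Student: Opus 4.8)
The plan is to reconstruct a coarse equivalence from an abstract isomorphism $\Phi\colon\roecstar{X}\to\roecstar{Y}$ by tracking how $\Phi$ interacts with the geometric data, which is encoded in the distinguished Cartan subalgebra. I would work throughout with the concrete representations on $\elltwo{X}$ and $\elltwo{Y}$ and with the subalgebra $\cartansubalg{X}{\CH}\subseteq\roecstar{X}$ of multiplication operators, which by $\rfThmCartanSubalg$ is a Cartan subalgebra carrying a canonical faithful conditional expectation and is intrinsically attached to $X$. Inside (the multiplier algebra of) this Cartan sits the commutative algebra $\ell^{\infty}(X)$, whose minimal projections are exactly the point indicators $\chf{\{x\}}$ and therefore correspond bijectively to the points of $X$.

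The crux --- and the step I expect to be the main obstacle --- is a rigidity statement for this Cartan subalgebra: a generic isomorphism need not send $\cartansubalg{X}{\CH}$ onto $\cartansubalg{Y}{\CH}$, so I would show that after composing $\Phi$ with an inner automorphism $\Ad(u)$ for a suitable unitary $u\in\multiplieralg{\roecstar{Y}}$ one may assume $\Phi\bigl(\cartansubalg{X}{\CH}\bigr)=\cartansubalg{Y}{\CH}$. The strategy is to characterise the Cartan intrinsically --- via quasi-locality together with its normalising and expectation structure, using $\rfPropCoarseSupport$ and $\rfCorQlocIFFEquiQLoc$ to recognise which operators are ``supported near the diagonal'' --- so that $\Phi\bigl(\cartansubalg{X}{\CH}\bigr)$ is again a Cartan subalgebra of $\roecstar{Y}$, and then to prove that any two such subalgebras are conjugate by an inner unitary. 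This is where the hard analysis lives: disentangling the $X$\nobreakdash-direction from the $\CB(\CH)$\nobreakdash-direction, controlling ``ghost-like'' phenomena, and upgrading an approximate intertwining into an exact conjugation.

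Granting the matched Cartans, restrict $\Phi$ to an isomorphism of the Cartans; this induces an isomorphism of the associated multiplication algebras $\ell^{\infty}(X)\to\ell^{\infty}(Y)$ sending minimal projections to minimal projections, hence a bijection $f\colon X\to Y$ determined by $\chf{\{x\}}\mapsto\chf{\{f(x)\}}$ (a priori only up to finite multiplicity, which uniform local finiteness keeps bounded, so $f$ is well defined up to closeness). To see that $f$ is a coarse equivalence, note that after conjugation $\Phi$ preserves finite propagation: an operator $T\in\roecstar{X}$ with $\chf{\{x\}}\,T\,\chf{\{x'\}}\neq 0$ only when $d_X(x,x')\le r$ is carried to an operator $\Phi(T)$ of some finite propagation $r'$, and reading this through $f$ forces $d_Y\bigl(f(x),f(x')\bigr)\le r'$ whenever $d_X(x,x')\le r$. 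Taking $T$ to run over suitable partial-translation operators realising neighbouring pairs shows that $f$ is controlled. Applying the same argument to $\Phi^{-1}$ produces a controlled $g\colon Y\to X$, and the identity $\Phi^{-1}\circ\Phi=\id$ forces $g\circ f$ and $f\circ g$ to be close to the respective identity maps, so $f\in\coe{X}$ realises a coarse equivalence between $X$ and $Y$.

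Finally, the three variants are handled uniformly: the Cartan subalgebra, its conditional expectation, the point indicators $\chf{\{x\}}$, and the finite-propagation characterisation all sit in the same way inside $\roecstar{\variable}$, $\uroecstar{\variable}$ and $\cpcstar{\variable}$, so the identical argument applies with only the ambient algebra changed. In the uniform case the relevant Cartan is the honest maximal abelian subalgebra $\ell^{\infty}(X)$, which makes the rigidity step of the second paragraph considerably more classical.
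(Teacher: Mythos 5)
Your strategy is genuinely different from the paper's, and its central step is not a deferrable technicality that ``hard analysis'' could fill: it is either open in a form strictly stronger than the theorem, or outright false. The paper never attempts to align Cartan subalgebras. It passes to a spatially implementing unitary $U$ (\cref{thm: implementation}), records that $\Ad(U)$ is weakly approximately controlled (\cref{thm: uniformization}), and then proves the concentration inequality \cref{prop:conc ineq} (via the cotype-$2$ estimate \cref{lemma:cotype 2}): if every column $\chf{\overline{B}(y;R)}U\chf{x}$ had small norm, some $U\chf{A}U^*$ would fail to be quasi-local. This produces, for each $y$, a point $g(y)$ with $\norm{\chf{\overline{B}(y;R)}U\chf{g(y)}}>\delta$, and \cref{lem:approximationa are controlled} makes $g$ (and the analogous $f$ built from $U^*$) controlled and mutually inverse \emph{up to closeness only} --- crucially, $f$ and $g$ are nowhere claimed to be bijections. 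Your plan instead requires a unitary $u\in\multiplieralg{\roecstar{Y;\CH}}$ such that $\Ad(u)\circ\Phi$ carries $\ell^\infty(X;\CK(\CH))$ onto $\ell^\infty(Y;\CK(\CH))$. Uniqueness of Cartan(-like) subalgebras of Roe-type algebras up to inner conjugacy is precisely the problem that the rigidity literature is structured to circumvent; even for $\uroecstar{X}$ it is only known under property~A (White--Willett), so the step you defer is harder than the theorem being proved, not ``considerably more classical''.

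For $\roecstar{\variable}$ and $\cpcstar{\variable}$ the step is in fact impossible in general. As you partly note, an exact matching of the two copies of $\ell^\infty(\variable;\CK(\CH))$ identifies their minimal closed ideals, which are exactly the fibres over single points; hence it induces a genuine \emph{bijection} $f\colon X\to Y$ with $U\chf{x}U^*=\chf{f(x)}$, and your propagation argument then correctly shows that $f$ and $f^{-1}$ are controlled, i.e.\ that $X$ and $Y$ are \emph{bijectively} coarsely equivalent. But bijective coarse equivalence is strictly finer than coarse equivalence, whereas $\roecstar{\variable}$ only remembers the coarse type. Concretely, let $X=\ZZ$ and let $Y$ be obtained from $\ZZ$ by doubling each point of $\ZZ_{\geq0}$: these are coarsely equivalent, so $\roecstar{X}\cong\roecstar{Y}$, yet no controlled bijection with controlled inverse exists between them, since every coarse equivalence $Y\to\ZZ$ pushes the fundamental class in uniformly finite homology to $[1]+[\charfunc{\ZZ_{\geq0}}]\neq[1]$, and the Block--Weinberger/Whyte obstruction then rules out closeness to a bijection. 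Hence for this pair no inner perturbation of $\Phi$ can align the Cartans, and the approach must be replaced by one --- like the paper's concentration argument --- that extracts a possibly non-injective, non-surjective coarse equivalence directly from the implementing unitary.
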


\begin{remark}
    The Roe algebra $\roecstar X$ can be defined for any proper metric space $X$. However, the most important spaces in view of applications are those of \emph{bounded geometry} (\emph{e.g.}\ covers of compact Riemannian manifolds). A space has bounded geometry if and only if it is coarsely equivalent to a uniformly locally finite metric space. Since coarsely equivalent metric spaces have isomorphic Roe algebras, \cref{thm:intro:rigidity} does indeed solve the \cstar{}rigidity problem for bounded geometry metric spaces.

    As a matter of fact, the techniques here introduced can be adapted to prove \cstar{}rigidity for arbitrary proper metric spaces. However, doing so requires overhauling an important amount of existing literature, and cannot be done in a short space. To keep this paper brief and clear, we decided to leave such an endeavour for a different work \cite{rigid}.  
\end{remark}

The case $\uroecstar\variable$ of \cref{thm:intro:rigidity} is the main theorem of \cite{braga_rigid_unif_roe_2022}.
To a large extent, the strategy of proof follows the same route that, starting with \cite{spakula_rigidity_2013}, all the works on \cstar{}rigidity took. Our main technical contribution is the proof of an unconditional ``concentration inequality'' (cf.\ \cref{prop:conc ineq}) which represents the last piece of the puzzle in the construction of coarse equivalences.

\

Going beyond the problem of \cstar{}rigidity, one disappointing aspect of this picture is a severe lack of functoriality. For instance, while it is true that with every coarse equivalence $f\colon X\to Y$ one can associate an isomorphism $\roecstar X\to\roecstar Y$, this choice is highly non-canonical and very poorly behaved with respect to composition.
On the other hand, it was observed in \cite{braga_gelfand_duality_2022} that this ambiguity vanishes up to innerness. Namely, there is a natural group homomorphism $\tau \colon \coe X\to \out(\roecstar X)$ from the group of closeness-classes of coarse equivalences to the group of outer automorphisms, which is the quotient $\aut(\roecstar X)/\CM(\roecstar X)$ of the group of automorphisms of $\roecstar X$ modulo inner automorphisms of its multiplier algebra (innerness is taken in the multiplier algebra, as $\roecstar X$ is not unital).
It is also proved in \cite{braga_gelfand_duality_2022}*{Theorem B} that the map $\tau$ is in fact an isomorphism for uniformly locally finite metric spaces with \emph{property~A} (see, \emph{e.g.}\ \cites{yu_coarse_2000,willett-2009-some-notes-prop-a,roe_ghostbusting_2014}).
The second contribution of the present work shows that this result holds in complete generality as well.

\begin{alphthm}\label{thm:intro:iso}
    If $X$ is a uniformly locally finite metric space, there is a canonical isomorphism
    \[
    \tau \colon \coe X\xrightarrow{\ \cong\ } \out(\roecstar X).
    \]
\end{alphthm}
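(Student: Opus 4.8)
The plan is to reduce everything to a single surjectivity statement. The homomorphism $\tau$ itself, together with its well-definedness and injectivity, is already available to us: the companion paper factors it as
\[
\coe X \xrightarrow{\ \cong\ } \cuni X \hookrightarrow \out(\roecstar X),
\]
where $\cuni X$ is the group of closeness classes of unitaries covering a coarse equivalence, the first arrow being the isomorphism of \rfIsoCEtoCtrUni and the second the injective homomorphism $V \mapsto [\Ad(V)]$ of \rfEmbeddingCtrUniintoOut. Thus $\tau$ is an \emph{injective} homomorphism, and the whole content of \cref{thm:intro:iso} is its \emph{surjectivity}: every automorphism of $\roecstar X$ must agree, modulo an inner automorphism of the multiplier algebra $\CM(\roecstar X)$, with one induced by a coarse self-equivalence of $X$.

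To prove surjectivity I would fix $\phi \in \aut(\roecstar X)$ and view it as an isomorphism $\roecstar X \to \roecstar X$ between the Roe algebras of two copies of $X$, then run the proof of \cref{thm:intro:rigidity} with $Y = X$. The decisive input is the unconditional concentration inequality of \cref{prop:conc ineq}, which forces the matrix coefficients of $\phi$ with respect to an ample module to concentrate near the graph of a controlled map; from this one reads off a coarse equivalence $f \colon X \to X$ exactly as in the construction of coarse equivalences underlying \cref{thm:intro:rigidity}. Applying the same argument symmetrically to $\phi^{-1}$ guarantees that $f$ is a genuine coarse equivalence and not merely a controlled map.

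It then remains to check that $[\phi] = \tau([f])$ in $\out(\roecstar X)$. Once $\phi$ is known to be controlled, \rfThmExistCoveringIsomSharp supplies a covering isometry, that is, a unitary $V_\phi$ covering $f$ with $\phi = \Ad(V_\phi)$ on $\roecstar X$; by construction $\tau([f])$ is represented by $\Ad(V_f)$ for any unitary $V_f$ covering $f$. Since $V_\phi$ and $V_f$ cover the \emph{same} coarse equivalence, the uniqueness statement \rfLemCoveringUniIsUnique shows that $V_\phi V_f^*$ is a unitary in $\CM(\roecstar X)$, whence
\[
\phi = \Ad(V_\phi) = \Ad\bigl(V_\phi V_f^*\bigr)\circ \Ad(V_f)
\]
differs from the representative $\Ad(V_f)$ of $\tau([f])$ by an inner automorphism of the multiplier algebra. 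Therefore $[\phi] = \tau([f])$ and $\tau$ is surjective.

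The only genuinely hard step is the extraction of $f$ from the abstract automorphism $\phi$ with no amenability or property A hypothesis on $X$; this is precisely what the unconditional \cref{prop:conc ineq} is engineered to deliver, and it is what upgrades the property A result of \cite{braga_gelfand_duality_2022} to full generality. Everything after the coarse equivalence has been produced — controlledness of $\phi$, existence and uniqueness of the covering isometry, and the passage to innerness in the multiplier algebra — is formal and already isolated in the companion paper, so I expect no further obstruction there.
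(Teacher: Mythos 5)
There is a genuine gap, and it sits exactly where you declare the remaining work to be ``formal.'' Your reduction of \cref{thm:intro:iso} to surjectivity is correct, as is the extraction of a coarse equivalence $f\colon X\to X$ from $\phi=\Ad(U)$ via \cref{prop:conc ineq} and the proof of \cref{thm:intro:rigidity}. The problem is the next step: you assert that once $\phi$ is controlled, \rfThmExistCoveringIsomSharp produces a unitary $V_\phi$ \emph{covering} $f$ with $\phi=\Ad(V_\phi)$. That result (and its relatives, e.g.\ \cite{willett_higher_2020}*{Proposition 4.3.4}) only asserts that for a \emph{given} coarse equivalence $f$ there exists \emph{some} covering unitary, whose $\Ad$ is \emph{some} isomorphism associated with $f$ --- it says nothing about the given $\phi$. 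Moreover, the stronger statement you need is simply not true as stated: the implementing unitary $U$ of $\phi$ need not be coarsely supported on $f$. The concentration inequality guarantees that for each $x$ a definite amount of mass of $U\chf{x}$ lands near $f(x)$, but it does not make the far-away matrix coefficients vanish, which is what ``covering $f$'' requires. What is actually true --- and what the paper proves as \cref{thm: refined rigidity} --- is only that $U$ is a \emph{norm limit} of operators coarsely supported on $f$.

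Bridging from there to $[\phi]=\tau([f])$ is the main technical content of Section 4, not a formality. One must show that for a unitary $W$ covering $f$, the unitary $UW^*$ lies in $\cpcstar{X}=\CM(\roecstar{X})$; a priori there could be unitaries implementing automorphisms whose associated coarse equivalence is the identity yet which do not lie in the multiplier algebra, and ruling this out is precisely \cref{thm: refined rigidity}. Its proof (\cref{claim: upgrade trick}) uses the infinite-dimensionality of $\CH$ in an essential way to orthogonalize the ``error corners'' $\chf{C_i}U(\chf{x_i}\otimes V_i)p_{x_i}$, together with an SOT-closure argument to upgrade strong convergence to a norm approximation by coarsely supported operators. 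Your appeal to the uniqueness of covering isometries (\rfLemCoveringUniIsUnique) is fine for comparing two unitaries that both genuinely cover $f$, but it cannot be applied to $U$ itself until this approximation theorem is in place. So the skeleton of your argument matches the paper's, but the decisive lemma is missing rather than cited.
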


\cref{thm:intro:iso} is obtained by proving a refinement of \cref{thm:intro:rigidity} which we find of independent interest (cf.\ \cref{thm: refined rigidity}). This result applies to $\out(\cpcstar X)$ as well.

\subsection*{Acknowledgements}
We would like to thank an anonymous referee for their proof and insights on \cref{lemma:cotype 2}.

\section{Preliminaries}\label{sec: prelims}
This section briefly covers the necessary background for the paper. We refer the reader to \cites{braga_farah_rig_2021,braga_gelfand_duality_2022,spakula_maximal_2013,roe_lectures_2003,roe-algs,willett_higher_2020} (and references therein) for a longer discussion on these topics.
Throughout, $X$ and $Y$ denote metric spaces and their metrics will be $d_X$ and $d_Y$ respectively.

\begin{definition}
    A metric space \(X\) is \emph{uniformly locally finite} if \(\sup_{x \in X} \abs*{\overline{B}(x; R)} < \infty\) for all \(R \geq 0\), where 
    \[
    \overline{B}(x; R)\coloneqq \braces{x'\in X\mid d_X(x,x')\leq R}
    \]
    denotes the closed \(R\)-ball around \(x\) and \(\abs{A}\) is the cardinality of \(A \subseteq X\).
\end{definition}

As mentioned in the introduction, every bounded geometry metric space is coarsely equivalent to a uniformly locally finite metric space. Since this is the only case we are focusing on, we shall work under the following.

\begin{convention}
    $X$ and $Y$ denote uniformly locally finite metric spaces (in particular, they are countable).
\end{convention}

For simplicity, we will only define Roe algebras in the setting above. A more general treatment can be found \emph{e.g.}\ in \cites{willett_higher_2020,roe-algs}.

\begin{remark}
    In the following, uniform local finiteness is only needed in \cref{thm: uniformization}, the rest of the arguments work equally well for all locally finite metric spaces.
\end{remark}

We let \(\CH\) denote an arbitrary (but fixed) Hilbert space. For any \(A \subseteq X\), \(\chf{A} \in \CB(\ell^2(X; \CH))\) is the orthogonal projection onto \(\ell^2(A; \CH) \subseteq \ell^2(X; \CH)\). For ease of notation, we also let \(\chf{x} \coloneqq \chf{\{x\}}\) for all \(x \in X\).

\begin{definition} \label{def:loc comp}
    We say \(t \in \CB(\elltwo X)\) is \emph{locally compact} if \(t \chf{x}\) and \(\chf{x} t\) are compact operators for every \(x \in X\).
\end{definition}

\begin{definition} \label{def:approx ops}
    Let \(t \in \CB(\elltwo X), R \geq 0\) and \(\varepsilon > 0\).
    \begin{enumerate}[label=(\roman*)]
        \item \(t\) has \emph{propagation at most \(R\)} (denoted $\Prop t\leq R$) if \(\chf{B} t \chf{A} = 0\) for every \(A, B \subseteq X\) such that \(d_X(A, B) > R\).
        \item \(t\) has \emph{controlled propagation} if it has propagation at most \(R\) for some \(R \geq 0\).
        \item \(t\) is \emph{\(\varepsilon\)-\(R\)-approximable} if there is some \(s \in \CB(\elltwo X)\) of propagation at most \(R\) such that \(\norm{s - t} \leq \varepsilon\).
        \item \(t\) is \emph{approximable} if for all \(\varepsilon > 0\) there is some \(R \geq 0\) such that \(t\) is \(\varepsilon\)-\(R\)-approximable.
    \end{enumerate}
\end{definition}

\begin{remark}
    Note that if \(\CH\) is infinite dimensional then the identity operator is \emph{not} locally compact, but it does have propagation \(0\).
\end{remark}

We may now define ``Roe-like'' algebras depending on \(\CH\).
\begin{definition} \label{def:roe like algebras}
    Let \(X\) be a uniformly locally finite metric space.
    \begin{enumerate}[label=(\roman*)]
        \item \(\roecstar{X; \CH}\) is the \cstar subalgebra of \(\CB(\elltwo X)\) generated by the locally compact operators of controlled propagation.
        \item \(\cpcstar{X; \CH}\) is the \cstar subalgebra of \(\CB(\elltwo X)\) generated by the operators of controlled propagation.
    \end{enumerate}
\end{definition}

The Roe-like algebras discussed in the introduction are defined making the following choices of coefficients:
\begin{enumerate}[label=(\roman*)]
  \item \(\roecstar{X} \coloneqq \roecstar{X; \ell^2(\NN)};\)
  \item \(\cpcstar{X} \coloneqq \cpcstar{X; \ell^2(\NN)}; \)
  \item \(\uroecstar{X} \coloneqq \roecstar{X; \C} = \cpcstar{X; \C}.\)
\end{enumerate}
For the sake of clarity and generality, in the rest of this paper we will keep the dependence on $\CH$ explicit.

\begin{remark}
    We briefly observe the following.
    \begin{enumerate}[label=(\roman*)]
        \item \(\CH\) is finite dimensional if and only if \(\roecstar{X; \CH} = \cpcstar{X; \CH}\).
        \item \(\cpcstar{X; \CH}\) can also be defined as the set of approximable operators (cf.\ \cref{def:approx ops}).
        \item It is routine to check
        that every compact operator is in \(\roecstar{X; \CH}\). Likewise, it is also clear that \(\ell^\infty(X, \CK(\CH)) \subseteq \roecstar{X; \CH} \subseteq \cpcstar{X; \CH}\).
    \end{enumerate}
\end{remark}

The following weakening of \(\varepsilon\)-\(R\)-approximability will be of use in \cref{prop:conc ineq}.
\begin{definition} \label{def: quasi local}
    Let \(t \in \CB(\elltwo X), R \geq 0\) and \(\varepsilon > 0\). We say \(t\) is \emph{\(\varepsilon\)-\(R\)-quasi-local} if \(\norm{\chf{B} t \chf{A}} \leq \varepsilon\) for all \(A, B \subseteq X\) such that \(d_X(A, B) > R\).
\end{definition}

Observe that every \(\varepsilon\)-\(R\)-approximable operator is \(\varepsilon\)-\(R\)-quasi-local as well.

\begin{remark}
    Analogously to $\cpcstar{X,\CH}$, one can also consider the \cstar algebra of all ``quasi-local operators'', and show that the \cstar rigidity phenomenon applies in that case as well. A unified approach to proving \cstar rigidity simultaneously for all these Roe-like algebras is the subject of \cite{rigid}.
\end{remark}

The following is one of the key notions when discussing rigidity questions.
\begin{definition}\label{def: weak appr ops}
    A bounded operator $T\colon \elltwo X\to \elltwo Y$ is \emph{weakly approximately controlled} if for every $r\geq 0$ and $\varepsilon>0$ there is some $R\geq 0$ such that $\Ad(T)$ maps contractions of $r$-controlled propagation to $\varepsilon$-$R$-approximable operators:
    \[
    \{t\in\ops X\mid \norm{t}\leq 1,\ \Prop{t}\leq r \} \xrightarrow{\Ad(T)}
    \{\text{$\varepsilon$-$R$-approximable operators}\}.
    \]
\end{definition}

\begin{remark}
    In the terminology of \cite{braga_gelfand_duality_2022}*{Definition 3.1}, $T$ is weakly approximately controlled if and only if $\Ad(T)$ is \emph{coarse-like}.
\end{remark}

For the purposes of this text, the main interest of weakly controlled operators is the following.



\begin{lemma}\label{lem:approximationa are controlled}
    Let $T\colon \elltwo{X}\to \elltwo{Y}$ be weakly approximately controlled. Then for every $r\geq 0$ and $\delta>0$ there is an $R\geq 0$ such that if $A\subseteq X$ has $\diam(A) \leq r$ and $C,C'\subseteq Y$ are such that
    \[
    \norm{\chf C T\chf A}, \norm{\chf{C'}T\chf A} \geq \delta,
    \]
    then $d_Y(C,C')\leq R$.
\end{lemma}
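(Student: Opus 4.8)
The plan is to probe the operator $T$ with rank-one contractions supported on $A$, and then read off the conclusion from the fact that $\Ad(T)$ sends $r$-controlled contractions to \emph{uniformly} approximable operators. Throughout, fix $r\geq 0$ and $\delta>0$, and suppose $A,C,C'$ are as in the statement with $\diam(A)\leq r$.

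First I would extract two test vectors. Since $\norm{\chf C T\chf A}\geq\delta$, there is a unit vector $\eta$ with $\norm{\chf C T\chf A\eta}\geq\delta/2$; setting $\xi\coloneqq\chf A\eta$ gives a vector with $\chf A\xi=\xi$, $\norm\xi\leq 1$, and $\norm{\chf C T\xi}\geq\delta/2$. The same argument applied to $C'$ produces $\xi'$ with $\chf A\xi'=\xi'$, $\norm{\xi'}\leq 1$, and $\norm{\chf{C'}T\xi'}\geq\delta/2$.

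Next I would form the rank-one operator $\theta\coloneqq\xi\otimes\overline{\xi'}\in\ops X$, acting by $v\mapsto\langle v,\xi'\rangle\,\xi$. It is a contraction, since $\norm\theta=\norm\xi\,\norm{\xi'}\leq 1$, and it has $\Prop{\theta}\leq r$: indeed $\chf B\theta\chf{B'}=(\chf B\xi)\otimes\overline{(\chf{B'}\xi')}$ vanishes unless both $B$ and $B'$ meet $A$, which is impossible when $d_X(B,B')>r\geq\diam(A)$. A direct computation then gives $\Ad(T)(\theta)=T\theta T^*=(T\xi)\otimes\overline{(T\xi')}$, so that
\[
\chf C\,\Ad(T)(\theta)\,\chf{C'}=(\chf C T\xi)\otimes\overline{(\chf{C'}T\xi')},
\qquad
\norm{\chf C\,\Ad(T)(\theta)\,\chf{C'}}=\norm{\chf C T\xi}\,\norm{\chf{C'}T\xi'}\geq\tfrac{\delta^2}{4}.
\]

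Finally I would invoke that $T$ is weakly approximately controlled (\cref{def: weak appr ops}) with the fixed $r$ and with $\varepsilon\coloneqq\delta^2/8$. This yields an $R\geq 0$, depending only on $r$ and $\delta$, such that the image of any $r$-controlled contraction is $\varepsilon$-$R$-approximable; choose $s\in\ops Y$ with $\Prop{s}\leq R$ and $\norm{s-\Ad(T)(\theta)}\leq\varepsilon$. Then $\norm{\chf C s\,\chf{C'}}\geq\tfrac{\delta^2}{4}-\tfrac{\delta^2}{8}>0$, so $\chf C s\,\chf{C'}\neq 0$; since $\Prop{s}\leq R$ forces this compression to vanish whenever $d_Y(C,C')>R$, we obtain $d_Y(C,C')\leq R$, as desired. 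The one genuine idea here is the choice of the rank-one probe $\theta$ built from near-maximizing vectors for $C$ and $C'$ separately; the crucial structural point---rather than any real obstacle---is that the $R$ produced by weak approximate controlledness depends only on $r$ and $\varepsilon$, never on the particular contraction, which is exactly what makes the resulting $R$ uniform over all admissible $A,C,C'$.
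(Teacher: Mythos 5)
Your proof is correct and follows essentially the same route as the paper's: both probe $T$ with a rank-one contraction of propagation at most $r$ built from near-maximizing vectors for the two corners, compute that $\Ad(T)$ of this probe has a corner of norm at least $\delta^2/4$ between $C$ and $C'$, and conclude from weak approximate controlledness that $d_Y(C,C')$ is uniformly bounded. The only cosmetic difference is that the paper takes unit vectors directly in $\elltwo{A}$ and phrases the final step via quasi-locality, whereas you cut down by $\chf{A}$ and spell out the $\varepsilon$-$R$-approximation explicitly; the content is identical.
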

\begin{proof}
    Let $v,v'\in\elltwo{A}$ be two unit vectors such that $\norm{\chf C T(v)},\norm{\chf{C'} T(v')}\geq \delta/2$. Let $w=T(v)$ and $w'=T(v')$ be their images.
    Consider the rank-1 contractions
    \[
        \munit v{v'}(\variable) \coloneqq \scal{v'}{\variable} v
        \; \text{ and } \;
        \munit w{w'}(\variable) \coloneqq \scal{w'}{\variable} w.
    \]
    Observe that $\Ad(T)$ maps $\munit{v}{v'}$ to $\munit{w}{w'}$, and that
    \[
    \norm{\chf C \munit{w}{w'}\chf{C'}}= \norm{\chf C(w)}\norm{\chf{C'}(w')}\geq \frac{\delta^2}4.
    \]
    This shows that $\Ad(T)(\munit{v}{v'})$ is not $\frac{\delta^2}4$-$R$-quasilocal for any $R< d_Y(C,C')$.
    Since $\munit{v}{v'}$ is a contraction of propagation bounded by $\diam(A)\leq r$, the weak approximability condition on $T$ yields the desired uniform upper bound on $d_Y(C,C')$. 
\end{proof}

We will make use of the following results.

\begin{theorem}[cf.\ \cite{spakula_rigidity_2013}*{Lemma 3.1} and \cite{braga2020embeddings}*{Lemma 6.1}]\label{thm: implementation}
    Any isomorphism $\Phi\colon \roecstar{X; \CH} \to \roecstar{Y; \CH}$ is spatially implemented. That is, there exists a unitary operator $U\colon\elltwo{X}\to \elltwo{Y}$ such that $\Phi=\Ad(U)|_{\roecstar{X; \CH}}$. 

    Moreover, the same is true if $\roecstar{\variable; \CH}$ is replaced by $\cpcstar{\variable; \CH}$.
\end{theorem}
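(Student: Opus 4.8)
The plan is to exploit the fact that the algebra of compact operators $\CK(\elltwo X)$ sits inside $\roecstar{X;\CH}$ as an essential ideal, to show that any isomorphism must preserve it, and then to extract the implementing unitary from the rigidity of the compacts.

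First I would record that $\CK(\elltwo X)\subseteq\roecstar{X;\CH}$ (noted in the preliminaries) and that it is a closed two-sided ideal. The crucial step is an \emph{intrinsic}, isomorphism-invariant description of this ideal, for which I would use minimal projections. If $p$ is a minimal projection of $\roecstar{X;\CH}$, i.e.\ $p\,\roecstar{X;\CH}\,p=\C p$, then since the algebra contains $\CK(\elltwo X)$ we get $p\,\CK(\elltwo X)\,p=\CK(p\elltwo X)\subseteq\C p$, forcing $p$ to have rank one in $\CB(\elltwo X)$; conversely every rank-one projection of $\CB(\elltwo X)$ is compact, hence lies in $\roecstar{X;\CH}$ and is minimal there. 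Thus the minimal projections of $\roecstar{X;\CH}$ are exactly the rank-one projections of $\CB(\elltwo X)$, and the closed ideal they generate is precisely $\CK(\elltwo X)$. Since an isomorphism $\Phi$ carries minimal projections to minimal projections, it maps $\CK(\elltwo X)$ onto the corresponding ideal of $\roecstar{Y;\CH}$, namely $\CK(\elltwo Y)$.

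Next I would invoke the classical rigidity of the compacts: the restriction $\Phi|_{\CK(\elltwo X)}\colon\CK(\elltwo X)\to\CK(\elltwo Y)$ is a $*$-isomorphism, and any such isomorphism is spatially implemented by a unitary $U\colon\elltwo X\to\elltwo Y$ (the mere existence of $\Phi$ already forces $\dim\elltwo X=\dim\elltwo Y$). Replacing $\Phi$ by $\Psi\coloneqq\Ad(U^*)\circ\Phi$ reduces matters to showing that a $*$-homomorphism $\Psi\colon\roecstar{X;\CH}\to\CB(\elltwo X)$ which restricts to the identity on $\CK(\elltwo X)$ must be the identity. This is a soft essential-ideal argument: for $a\in\roecstar{X;\CH}$ and $k\in\CK(\elltwo X)$ one has $\Psi(a)k=\Psi(ak)=ak$ and $k\Psi(a)=ka$, so $\Psi(a)-a$ annihilates $\CK(\elltwo X)$ on both sides; as $\CK(\elltwo X)$ is essential in $\CB(\elltwo X)$ (its two-sided annihilator is $0$), this gives $\Psi(a)=a$. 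Hence $\Phi=\Ad(U)|_{\roecstar{X;\CH}}$.

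The same argument applies verbatim to $\cpcstar{\variable;\CH}$: it too contains $\CK(\elltwo X)$ as an essential ideal, its minimal projections are again the rank-one projections of $\CB(\elltwo X)$, and the closing essential-ideal step is insensitive to which of the two algebras one works in. I expect the only genuinely delicate point to be the intrinsic characterization in the second paragraph, which is what guarantees $\Phi(\CK(\elltwo X))=\CK(\elltwo Y)$; everything after it is formal. The minimal-projection description is what makes this robust, in particular avoiding any assumption of coarse connectedness of $X$ or $Y$ (as would be needed if one instead tried to single out $\CK$ as the smallest nonzero ideal).
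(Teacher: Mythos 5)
Your argument is correct and complete: the intrinsic characterization of $\CK(\elltwo X)$ as the closed ideal generated by the minimal projections, the spatial rigidity of the compacts, and the closing essential-ideal step are all sound, and the argument indeed transfers verbatim to $\cpcstar{X;\CH}$ since it also contains the compacts. Note, however, that the paper offers no proof of \cref{thm: implementation} at all — it is imported by citation from \cite{spakula_rigidity_2013} and \cite{braga2020embeddings} — and the proofs there run along essentially the same standard lines you followed, so there is nothing to contrast beyond observing that you have supplied a self-contained proof of a quoted result.
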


\begin{theorem}[cf.\ \cite{braga_gelfand_duality_2022}*{Theorems 3.4 and 3.5}]\label{thm: uniformization}
    If $U\colon \elltwo{X}\to \elltwo{Y}$ is a unitary such that $\Ad(U)$ implements an isomorphism $\roecstar{X; \CH} \cong \roecstar{Y; \CH}$, then $U$ is weakly approximately controlled. 
    
    Moreover, the same is true if $\roecstar{\variable; \CH}$ is replaced by $\cpcstar{\variable; \CH}$.
\end{theorem}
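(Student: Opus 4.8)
The plan is to argue by contradiction, reducing the failure of weak approximate control to the construction of a single operator that lies in $\roecstar{X; \CH}$ but whose image under $\Ad(U)$ escapes $\roecstar{Y; \CH}$, contradicting that $\Ad(U)$ restricts to an isomorphism. So suppose $U$ is \emph{not} weakly approximately controlled: there are $r\geq 0$ and $\varepsilon>0$ such that for every $n\in\NN$ some contraction $t_n\in\ops{X}$ with $\Prop{t_n}\leq r$ has the property that $s_n\coloneqq Ut_nU^*$ is not $\varepsilon$-$n$-approximable.

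The first step is a localisation lemma in the target: if $s\in\ops{Y}$ is not $\varepsilon$-$R$-approximable, then there are a \emph{finite} set $G\subseteq Y$ and a finite-dimensional subspace $\CH_0\subseteq\CH$ such that, with $e$ the projection onto $\ell^2(G;\CH_0)$, the finite-rank cut-down $ese$ is not $\tfrac{\varepsilon}{2}$-$R$-approximable. I would prove this by a soft compactness argument: the cut-downs converge to $s$ in the strong operator topology, the set of operators of propagation at most $R$ is closed in the weak operator topology, and on bounded sets this topology is metrisable (as $Y$ is countable and $\CH$ separable); testing against fixed vectors upgrades strong convergence of the cut-downs into a norm estimate on a weak limit of approximants, so that if every finite cut-down were $\tfrac{\varepsilon}{2}$-$R$-approximable then $s$ itself would be. Applying this to $s_n$ yields finite-rank witnesses $e_n s_n e_n$ that remain non-approximable at scale $n$, supported in a finite region $G_n\subseteq Y$; since an operator supported in $G_n$ has propagation at most $\diam(G_n)$, non-approximability at scale $n$ forces $\diam(G_n)>n$, so the witnessing regions spread out to infinity.

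Next I would push these witnesses back to finite-rank, propagation-$\leq r$ cut-downs on the $X$ side. Writing $e_n s_n e_n=(e_nU)\,t_n\,(e_nU)^*$ and using that $e_nU$ is finite rank, one approximates the vectors defining $e_nU$ by vectors finitely supported in $X$ and in $\CH$; since $\Prop{t_n}\leq r$ confines $t_n$ to the $r$-neighbourhood of the diagonal, this realises $e_n s_n e_n$, up to norm error below $\tfrac{\varepsilon}{8}$, as the cut-down $e_n(U\theta_nU^*)e_n$ for a finite-rank contraction $\theta_n$ that is a cut-down of $t_n$ with $\Prop{\theta_n}\leq r$. As a cut-down of a $\delta$-$R$-approximable operator is again $\delta$-$R$-approximable, $U\theta_nU^*$ is not $\tfrac{\varepsilon}{8}$-$n$-approximable. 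We now have witnesses that are simultaneously finite rank and of controlled propagation.

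The final and hardest step is a gliding-hump construction assembling the $\theta_n$. I would first dispose of the degenerate case that the $X$-supports of infinitely many $\theta_n$ stay in a fixed finite region: then the $\theta_n$ lie in a fixed finite-dimensional corner, a weak limit point $\theta_\infty$ has propagation bounded by the diameter of that region—hence $\theta_\infty\in\roecstar{X;\CH}$—while $U\theta_\infty U^*$ inherits non-approximability at every scale, contradicting $U\theta_\infty U^*\in\roecstar{Y;\CH}\subseteq\cpcstar{Y;\CH}$. Otherwise the supports spread, and using uniform local finiteness I would pass to a subsequence $(\theta_{n_k})_k$ whose $X$-supports are pairwise disjoint and whose regions $G_{n_k}$ are pairwise far apart, and set $a\coloneqq\sum_k\theta_{n_k}$. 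Disjoint supports make $a$ a contraction, locally compact, and—since every summand has propagation at most $r$—of propagation at most $r$; thus $a\in\roecstar{X;\CH}$ and $UaU^*\in\roecstar{Y;\CH}$ is approximable. On the other hand, each $U\theta_{n_k}U^*$ is finite rank, so its mass outside a large finite region of $Y$ is norm-negligible; arranging the $G_{n_k}$ to grow quickly along the recursion makes the cross-contributions $\sum_{j\neq k}\norm{e_{n_k}U\theta_{n_j}U^*e_{n_k}}$ summably small, so that cutting a hypothetical propagation-$R$ approximant of $UaU^*$ down by $e_{n_k}$ with $n_k>R$ would approximate the non-approximable witness $e_{n_k}U\theta_{n_k}U^*e_{n_k}$ at scale $R<n_k$, a contradiction. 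The crux is exactly this \emph{simultaneous} separation—disjoint domains in $X$ together with far-apart images in $Y$—whose entanglement through the non-spatial conjugation by $U$ is what makes uniform local finiteness indispensable, keeping the greedy selection alive with only finitely many indices discarded at each stage. For $\cpcstar{\variable;\CH}$ the argument is the same but lighter, since each $t_n$ already lies in $\cpcstar{X;\CH}$ and the assembled $a$ needs only controlled propagation, not local compactness.
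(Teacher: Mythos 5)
This statement is not proved in the paper at all: it is imported verbatim from \cite{braga_gelfand_duality_2022}*{Theorems 3.4 and 3.5}, so your proposal can only be compared with the argument in that reference, which it essentially tries to reconstruct. Your first two steps are correct and do match the standard approach: the localisation lemma (non-$\varepsilon$-$R$-approximability is already witnessed on a finite-rank diagonal corner $e s e$, proved by taking a WOT-cluster point of norm-bounded approximants of the cut-downs) and the pullback to finite-rank contractions $\theta_n=q_nt_nq_n$ of propagation at most $r$ are sound, as is the observation that $\diam(G_n)>n$.

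The gliding hump, however, contains genuine gaps, concentrated exactly where the real difficulty sits. First, your dichotomy is not exhaustive: the supports $F_n$ could all contain a fixed point of $X$ while not staying inside any fixed finite region, and then neither branch applies. Second, the ``degenerate'' branch fails on three counts: the $\theta_n$ need \emph{not} lie in a fixed finite-dimensional corner, because the finite-dimensional subspaces of $\CH$ entering $q_n$ and $e_n$ may grow with $n$; a WOT-limit point $\theta_\infty$ has propagation at most $r$ but need not be locally compact (a WOT-limit of finite-rank operators can be, say, $\chf{x_0}$), so $\theta_\infty\in\roecstar{X;\CH}$ does not follow; and, most seriously, $U\theta_\infty U^*$ does not ``inherit non-approximability'': the set of $\varepsilon$-$R$-approximable operators with approximants of norm at most $C$ is WOT-closed --- by the very cluster-point argument of your own localisation lemma --- so non-approximability is destroyed, not preserved, under WOT limits. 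Third, in the main branch, ``$G_{n_k}$ pairwise far apart'' is generally unachievable, since $\diam(G_{n_k})>n_k$ forces these sets to be huge (in $Y=\ZZ$ they could all be intervals containing $0$). What you actually need is the cross-term estimate $\sum_{j\neq k}\norm{e_{n_k}U\theta_{n_j}U^*e_{n_k}}\ll\varepsilon$, and securing it --- for $j<k$ by making $e_{n_k}$ nearly annihilate the finitely many fixed finite-rank operators $U\theta_{n_j}U^*$, for $j>k$ by making $q_{n_j}$ nearly annihilate the fixed finite-rank operator $U^*e_{n_k}$ --- requires showing that the witnesses can be modified so as to escape to infinity in $\elltwo{X}$ and $\elltwo{Y}$ while retaining their non-approximability at scale $n$. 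That escape is the true crux of the argument in \cite{braga_gelfand_duality_2022} and its predecessors, and it is precisely the point your sketch assumes rather than proves.
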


\section{Proof of C*-rigidity}\label{sec: rigidity}
In this section we prove \cref{thm:intro:rigidity}.
The proof will start as usual, namely by applying \cref{thm: implementation,thm: uniformization} to pass from an isomorphism of \cstar algebras to a well-behaved unitary operator.
It then remains to use this operator to construct a coarse equivalence.
In order to do this, we first need to prove \cref{prop:conc ineq}, which is the key new technical step in the proof of \cref{thm:intro:rigidity}.

\subsection{A concentration inequality}
The following lemma leverages the fact that Hilbert spaces have cotype 2.

\begin{lemma} \label{lemma:cotype 2}
    Let \((v_n)_{n \in \NN} \subseteq \CH\) be a sequence of vectors of the Hilbert space \(\CH\) with square-summable norms. Then
    \[
        \sup_{\varepsilon_n = \pm 1} \norm{\sum_{n \in \NN} \varepsilon_n v_n}^2 \geq \sum_{n \in \NN} \norm{v_n}^2,
    \]
    where the supremum is taken among all possible \((\varepsilon_n)_{n \in \NN} \in \{-1, +1\}^\NN\).
\end{lemma}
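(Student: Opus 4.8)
The plan is to deduce the inequality from the elementary fact that the supremum of a nonnegative quantity dominates its average, and to compute the relevant average over random signs using orthogonality. First I would introduce i.i.d.\ Rademacher signs $(\varepsilon_n)_{n\in\NN}$, uniform on $\{-1,+1\}$, on a probability space $(\Omega,\PP)$, and regard each realization as a genuine sign sequence. The key computation is a parallelogram identity in expectation: for the partial sums $S_N \coloneqq \sum_{n=1}^N \varepsilon_n v_n$ one has
\[
    \EE\,\norm{S_N}^2 = \sum_{m,n\le N}\EE[\varepsilon_m\varepsilon_n]\,\scal{v_m}{v_n} = \sum_{n=1}^N \norm{v_n}^2,
\]
since $\EE[\varepsilon_m\varepsilon_n]=\delta_{mn}$. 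This is exactly the statement that $\CH$ has cotype $2$ with constant $1$.

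Next I would address convergence, which is the one genuine subtlety: for a \emph{fixed} sign sequence the series $\sum_n \varepsilon_n v_n$ need not converge (take all signs $+1$ with $\norm{v_n}=1/n$). The square-summability hypothesis is precisely what rescues the situation on average. The displayed identity shows that the increments $\varepsilon_n v_n$ are orthogonal in $L^2(\Omega;\CH)$ and that $\EE\norm{S_N}^2 \le \sum_{n}\norm{v_n}^2 < \infty$, so $(S_N)_N$ is Cauchy in the Hilbert space $L^2(\Omega;\CH)$. It therefore converges there (and almost surely) to some $S_\infty$ with $\EE\norm{S_\infty}^2 = \sum_{n}\norm{v_n}^2$.

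Finally I would conclude via the trivial bound that a supremum exceeds a mean. Since the nonnegative function $\omega\mapsto\norm{S_\infty(\omega)}^2$ has mean $\sum_n\norm{v_n}^2$, there is a sign sequence $\omega$ (in fact a set of them of positive measure) along which $\norm{\sum_n \varepsilon_n v_n}^2 = \norm{S_\infty(\omega)}^2 \ge \sum_n\norm{v_n}^2$. As every such $\omega$ is an admissible choice of signs, this yields the asserted lower bound on the supremum.

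The main obstacle is purely the convergence bookkeeping discussed above; the $L^2$-martingale formulation is simply the cleanest way to handle it. If one prefers to avoid probabilistic limits entirely, the same averaging can be run at the finite level---the average of $\norm{\sum_{n=1}^N\varepsilon_n v_n}^2$ over the $2^N$ sign patterns equals $\sum_{n\le N}\norm{v_n}^2$, so some pattern attains at least this value---after which one passes to the limit $N\to\infty$; the argument above is exactly the device that upgrades these finite choices to a single infinite sign sequence.
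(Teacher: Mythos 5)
Your proposal is correct and follows essentially the same route as the paper's proof: averaging $\norm{\sum_n \varepsilon_n v_n}^2$ over random signs, using orthogonality of the Rademacher system to identify the average with $\sum_n \norm{v_n}^2$, and concluding that the supremum dominates the mean. The only cosmetic difference is that you phrase the convergence step via $L^2$-martingales where the paper phrases it via the isometry $L^2(\Omega)\otimes\CH = L^2(\Omega;\CH)$; the content is identical.
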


\begin{proof}
    Let \(\Omega \coloneqq \{-1, +1\}^\NN\) be equipped with the usual product probability measure, and recall the identification $L^2(\Omega)\otimes\CH = L^2(\Omega;\CH)$, where the latter is given the norm $\norm{F}^2\coloneqq\int_\Omega \norm{F(\varepsilon)}^2d\varepsilon$.

    The natural projections \(r_n \colon \Omega \to \{\pm 1\}\) (the Rademacher functions) are orthonormal in $L^2(\Omega)$. It then follows from square-summability that the sum $F\coloneqq \sum_{n\in\NN} r_n\otimes v_n$ gives a well-defined element of $L^2(\Omega)\otimes\CH$ of square-norm 
    \[
    \norm{F}^2 = \sum_{n\in\NN}\norm{r_n\otimes v_n}^2 = \sum_{n\in\NN}\norm{v_n}^2.
    \]
    On the other hand, when seen in $L^2(\Omega;\CH)$, the element $F$ is the function $F(\varepsilon)= \sum_{n \in \NN} \varepsilon_n v_n$. Computing its norm in $L^2(\Omega;\CH)$ then yields
    \[
        \int_{\Omega} \norm{\sum_{n \in \NN} \varepsilon_n v_n}^2 d\varepsilon = \sum_{n \in \NN} \norm{v_n}^2.
    \]
    The lemma now follows, as the supremum is at least as large as the average.
\end{proof}

\begin{proposition}\label{prop:conc ineq}
    Let $U\colon \elltwo{X}\to \elltwo{Y}$ be a unitary. Given $\delta>0$ and $R>0$, suppose there is some $y\in Y$ such that
    \[
    \norm{\chf{\overline{B}(y;R)}U\chf x}\leq \delta \quad \text{for all } x\in X.
    \]
    Then there is some $A \subseteq X$ such that $U\chf{A}U^*$ is not $\frac 12(1-\delta^2)^{1/2}$-$R$-quasilocal.
\end{proposition}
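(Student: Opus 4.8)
The plan is to exploit \cref{lemma:cotype 2}: the hypothesis says that every column $U\chf x$ is almost entirely supported away from the ball $\overline B(y;R)$, and I want to reassemble these columns with a clever choice of signs so that a single ``column'' of some $U\chf A U^*$ is pushed out of the ball. Fix a unit vector $\xi\in\elltwo Y$ supported at $y$ (so $\xi=\chf y\xi$) and pull it back to the unit vector $w\coloneqq U^*\xi\in\elltwo X$. Decomposing $w=\sum_{x\in X}w_x$ into its columns $w_x\coloneqq\chf x w$, I first record a mass estimate. Since $\norm{\chf{\overline B(y;R)}Uw_x}=\norm{\chf{\overline B(y;R)}U\chf x\,w_x}\le\delta\norm{w_x}$ by hypothesis, and $U$ is unitary, the portion of $Uw_x$ lying outside the ball satisfies
\[
\norm{\chf{Y\smallsetminus\overline B(y;R)}Uw_x}^2=\norm{w_x}^2-\norm{\chf{\overline B(y;R)}Uw_x}^2\ge(1-\delta^2)\norm{w_x}^2.
\]
Writing $v_x\coloneqq\chf{Y\smallsetminus\overline B(y;R)}Uw_x$ and summing over $x$ gives $\sum_x\norm{v_x}^2\ge(1-\delta^2)\sum_x\norm{w_x}^2=1-\delta^2$.

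Next I feed the square-summable sequence $(v_x)$ into \cref{lemma:cotype 2}, obtaining signs $(\varepsilon_x)\in\{\pm1\}^X$ with $\norm{\sum_x\varepsilon_x v_x}^2\ge\sum_x\norm{v_x}^2\ge 1-\delta^2$. The point of the sign trick is the algebraic identity $\sum_x\varepsilon_x w_x=2\chf A w-w$, where $A\coloneqq\{x\in X:\varepsilon_x=+1\}$. Applying $U$ and using $Uw=\xi$, which is supported at $y$ and hence annihilated by $\chf{Y\smallsetminus\overline B(y;R)}$, yields
\[
\chf{Y\smallsetminus\overline B(y;R)}U\Bigl(\sum_x\varepsilon_x w_x\Bigr)=2\,\chf{Y\smallsetminus\overline B(y;R)}U\chf A w=2\,\chf{Y\smallsetminus\overline B(y;R)}U\chf A U^*\xi.
\]
Taking norms, $\norm{\chf{Y\smallsetminus\overline B(y;R)}U\chf A U^*\xi}=\tfrac12\norm{\sum_x\varepsilon_x v_x}\ge\tfrac12(1-\delta^2)^{1/2}$, and since $\xi$ is a unit vector supported at $y$ this forces the operator bound $\norm{\chf{Y\smallsetminus\overline B(y;R)}U\chf A U^*\chf y}\ge\tfrac12(1-\delta^2)^{1/2}$.

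Finally, I read this off as a failure of quasi-locality for $t\coloneqq U\chf A U^*$ with the pair $C\coloneqq\{y\}$ and $C'\coloneqq Y\smallsetminus\overline B(y;R)$. By definition every point of $C'$ lies at distance strictly greater than $R$ from $y$; moreover, since $Y$ is uniformly locally finite the ball $\overline B(y;R')$ is finite for every $R'$, so the set of distances from $y$ has no accumulation point and $d_Y(C,C')>R$ holds strictly. Hence $t$ is not $\tfrac12(1-\delta^2)^{1/2}$-$R$-quasilocal in the sense of \cref{def: quasi local}, as desired. The one delicate point is the interface between \cref{lemma:cotype 2} and the mass estimate: the supremum over sign sequences must be realized (or approximated) by an honest $(\varepsilon_x)$, which is where I expect to need a finite truncation of $(v_x)$ together with the fact that the discarded tail contributes arbitrarily little to the $\ell^2$ mass; I anticipate this bookkeeping, rather than any conceptual difficulty, to be the main thing to get right, possibly costing an arbitrarily small loss in the constant that can be absorbed or upgraded to a strict inequality using the operator-norm estimate above.
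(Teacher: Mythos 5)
Your proof is correct and follows the paper's argument almost verbatim: same test vector $U^*(\delta_y\otimes h)$, same column decomposition, same application of \cref{lemma:cotype 2}, and the same corner $\chf{Y\smallsetminus \overline B(y;R)}\,(\cdot)\,\chf y$ at the end. The only (cosmetic) difference is that you extract the factor $\tfrac12$ from the identity $\sum_x\varepsilon_x\chf x=2\chf A-1$ with $A=\{x:\varepsilon_x=+1\}$, whereas the paper splits $X=P\sqcup N$ and uses the triangle inequality to pick whichever half works; your variant is marginally cleaner since the positive part always suffices. As for the one point you flag: no finite truncation or $\eta$-loss is needed, because the proof of \cref{lemma:cotype 2} shows that the \emph{average} of $\norm{\sum_x\varepsilon_x v_x}^2$ over the Bernoulli space equals $\sum_x\norm{v_x}^2$ exactly, and a measurable function on a probability space attains a value at least its average at some point --- which is precisely how the paper obtains an honest sign sequence.
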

\begin{proof}
    Let \(h \in \CH\) be an arbitrary vector of norm \(1\) and consider $\delta_y\otimes h\in \ell^2(Y,\CH)$.
    Observe that \(v \coloneqq U^*(\delta_y \otimes h) \in \ell^2(X; \CH)\) also has norm \(1\) and, by construction,
    \(\delta_y \otimes h = U(v) = \chf{y} U (v)\).
    Letting \(B \coloneqq \overline{B}(y; R)\) one has that
    \begin{align}\label{eq:conc ineq:1}
        \begin{split}
            \norm{\left(1 - \chf{B}\right) U \chf{x} (v)}^2 
            & = \norm{U \chf{x} (v)}^2 - \norm{\chf{B} U \chf{x} (v)}^2  \\
            & \geq \norm{\chf{x} (v)}^2 - \delta^2 \norm{\chf{x} (v)}^2 
            = \left(1 - \delta^2\right) \norm{\chf{x} (v)}^2, 
        \end{split}
    \end{align}
    where the inequality follows from the hypothesis of the proposition. Applying \cref{lemma:cotype 2} to the family \(((1-\chf{B}) U \chf{x} (v))_{x \in X}\) one obtains that there is some choice of signs \((\varepsilon_x)_{x \in X} \in \{-1, +1\}^X\) such that
    \begin{align}\label{eq:conc ineq:2}
        \begin{split}
        \norm{\sum_{x \in X} \varepsilon_x \left(1 - \chf{B}\right) U \chf{x} (v)}^2 
        & \geq \sum_{x \in X} \norm{\left(1-\chf{B}\right) U \chf{x} (v)}^2    \\
        & \geq \left(1 - \delta^2\right) \sum_{x \in X} \norm{\chf{x} (v)}^2 
        = 1- \delta^2, 
        \end{split}
    \end{align}
    where the second inequality is given by \eqref{eq:conc ineq:1} and the last equality follows since \(\sum_{x \in X} \chf{x} v = v\) has norm \(1\).
    
    Partition \(X = P \sqcup N\), where \(P \coloneqq \{x \in X \mid \varepsilon_x = +1\}\) and \(N \coloneqq X \smallsetminus P\), and observe that
    \[
        \norm{\sum_{x \in X} \varepsilon_x \left(1 - \chf{B}\right) U \chf{x} (v)}
         \leq \norm{\sum_{x \in P} \varepsilon_x \left(1 - \chf{B}\right) U \chf{x} (v)} + \norm{\sum_{x \in N} \varepsilon_x \left(1 - \chf{B}\right) U \chf{x} (v)}.
    \]
    Then \eqref{eq:conc ineq:2} implies that, for either $A=P$ or \(A=N\), we have
    \[
        \norm{\sum_{x \in A} \left(1 - \chf{B}\right) U \chf{x} (v)}
         \geq \frac 12 \left(1-\delta^2\right)^{1/2}.
    \]
    We claim that \(A \subseteq X\) has the desired property, \emph{i.e.}\ some corner of \(U \chf{A} U^*\) has ``large'' norm. We just check this on the sets \(\{y\}\) and \(Y \smallsetminus B\):
    \begin{align*}
        \norm{\chf{Y \smallsetminus B} U \chf{A} U^* \chf{y}} 
        & \geq \norm{\chf{Y \smallsetminus B} U \chf{A} U^* \chf{y} (\delta_y\otimes h)}
        = \norm{\chf{Y \smallsetminus B} U \chf{A} (v)} \\
        &= \norm{\sum_{x \in A} \left(1 - \chf{B}\right) U \chf{x} (v)} 
        \geq \frac 12 \left(1-\delta^2\right)^{1/2}.
    \end{align*}
    Since $X$ is locally finite and \(B = \overline{B}(y; R)\), it follows that \(d_Y(y, Y \smallsetminus B) > R\) and the above computation proves the claim.
\end{proof}

\subsection{Completing the proof}
Now that \cref{prop:conc ineq} has been shown, completing the proof of \cstar rigidity is a standard routine which essentially relies on the arguments in \cites{spakula_rigidity_2013,braga_farah_rig_2021}.
However, the concluding part of \cite{spakula_rigidity_2013}*{Theorem 4.1 and Lemma 4.5} are tailored to algebras of locally compact operators, and do not directly apply to $\cpcstar{\variable; \CH}$.
In view of this, for the convenience of the reader, we prefer to include a quick proof.

\begin{proof}[Proof of \cref{thm:intro:rigidity}]
    Suppose that $\roecstar{X; \CH}\cong \roecstar{Y; \CH}$ or $\cpcstar{X; \CH}\cong\cpcstar{Y;\CH}$.
    \cref{thm: implementation,thm: uniformization} show that the isomorphism is implemented by a weakly approximately controlled unitary $U\colon \elltwo X\to\elltwo{Y}$.

    Arbitrarily fix some $0<\delta<1$. Since $\chf A$ has propagation zero for every $A\subseteq X$, weak approximability implies that for every $\varepsilon>0$ there is an $R \geq 0$ large enough so that $\Ad(U)(\chf{A})$ is $\varepsilon$-$R$-quasi-local for all \(A \subseteq X\). 
    Letting $\varepsilon = \frac 12(1-\delta^2)^{1/2}$, \cref{prop:conc ineq} implies that for every $y\in Y$ there is some $x\in X$ with $\norm{\chf{\overline{B}(y;R)}U\chf x}>\delta$. 
    Choosing one such $x \in X$ for all $y\in Y$ defines a function $g\colon Y\to X$ such that
    \begin{equation}\label{eq: g function}
    \norm{\chf{\overline{B}(y;R)}U \chf{g(y)}}>\delta \quad \text{for all } y\in Y.
    \end{equation}
    Observe that if $d_Y(y,y')\leq r$, then 
    \[
        \diam\bigparen{\overline{B}(y;R)\cup \overline{B}(y';R) }\leq 2R+r,
    \]
    so \cref{lem:approximationa are controlled} gives a uniform upper bound on $d_X(g(y),g(y'))$. That is, $g$ is a controlled map.

    Since $U^*$ implements the inverse isomorphism, it is weakly approximately controlled as well, always by \cref{thm: uniformization}. Therefore, the same argument can be used to construct a controlled map $f\colon X\to Y$ such that 
    \begin{equation}\label{eq: f function}
        \norm{\chf {f(x)} U \chf{\overline{B}(x;R)}}=\norm{\chf{\overline{B}(x;R)}U^*\chf{f(x)}}>\delta \quad \text{for all } x\in X
    \end{equation}
    (picking the largest constant if necessary, we may assume $R$ to be the same for $f$ and $g$).

    It only remains to show that $f\circ g$ and $g\circ f$ are close to the identity. \cref{eq: g function,eq: f function} show that
    \begin{align*}
        \norm{\chf{\overline{B}(f(x);R)}U \chf{g(f(x))}} &>\delta, \\
        \norm{\chf {\overline{B}(f(x);R)} U \chf{\overline{B}(x;R)}} &\geq \norm{\chf {f(x)} U \chf{\overline{B}(x;R)}} >\delta.
    \end{align*}
    By \cref{lem:approximationa are controlled} there is a uniform upper bound on $d_X\bigparen{g(f(x)), \overline{B}(x;R)}$, from which it follows that $g\circ f$ is close to the identity. A symmetric argument applies to $f\circ g$.
\end{proof}

\section{More refined results}\label{sec: refinement}
In this last section of the paper we will use two more results to obtain some more refined information regarding \cstar rigidity. This is in pursuit of some more ``functorial'' version of \cref{thm:intro:rigidity}.

We start recalling a few facts. In the following, $\CM(\roecstar{X; \CH})$ denotes the multiplier algebra of \(\roecstar{X; \CH}\), which is naturally realized as a subalgebra of $\ops{X}$.

\begin{theorem}[cf.\ \cite{braga_gelfand_duality_2022}*{Proposition 4.1}]\label{thm: multiplier}
    If \(X\) is a uniformly locally finite metric space, then \(\cpcstar{X; \CH} = \CM(\roecstar{X; \CH}).\)
\end{theorem}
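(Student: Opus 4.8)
The plan is to realize the multiplier algebra concretely as an idealizer and then prove the two inclusions separately. Since $\ell^\infty(X,\CK(\CH))\subseteq\roecstar{X;\CH}$ contains the net of finite-rank projections $\chf{F}\otimes e_\lambda$ (indexed by finite $F\subseteq X$ and finite-rank projections $e_\lambda$ of $\CH$), which increases strongly to the identity of $\ops{X}$, the algebra $\roecstar{X;\CH}$ acts nondegenerately on $\elltwo{X}$. Consequently $\CM(\roecstar{X;\CH})$ is canonically identified with the idealizer
\[
\braces{T\in\ops{X} \mid T\,\roecstar{X;\CH}\subseteq\roecstar{X;\CH}\ \text{and}\ \roecstar{X;\CH}\,T\subseteq\roecstar{X;\CH}},
\]
and it remains to show that this idealizer coincides with $\cpcstar{X;\CH}$.

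For the inclusion $\cpcstar{X;\CH}\subseteq\CM(\roecstar{X;\CH})$, I would use that operators of controlled propagation are norm-dense in $\cpcstar{X;\CH}$ and that locally compact operators of controlled propagation are norm-dense in $\roecstar{X;\CH}$ (in each case the relevant operators already form a $^*$-subalgebra); by continuity of multiplication it then suffices to check the idealizer condition on these dense sets. So let $t$ have $\Prop{t}\leq R_1$ and let $s$ be locally compact with $\Prop{s}\leq R_2$. Propagation is subadditive under composition, so $ts$ and $st$ have propagation at most $R_1+R_2$. For local compactness, $(ts)\chf{x}=t(s\chf{x})$ is compact because $s\chf{x}$ is, while $\chf{x}(ts)=\chf{x}\,t\,\chf{\overline{B}(x;R_1)}\,s$ (using $\Prop{t}\leq R_1$) is compact because $\chf{\overline{B}(x;R_1)}s$ is a \emph{finite} sum of the compact operators $\chf{x'}s$ --- this is exactly where uniform local finiteness enters. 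The computation for $st$ is symmetric, so $ts,st\in\roecstar{X;\CH}$ and every element of $\cpcstar{X;\CH}$ idealizes $\roecstar{X;\CH}$.

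The reverse inclusion is the heart of the matter: I must show that an idealizing $T$ is \emph{approximable}, i.e.\ a norm limit of controlled-propagation operators. I would first argue that $T$ is quasi-local. Testing the multiplier condition against the elements $\chf{x}\otimes e\in\ell^\infty(X,\CK(\CH))$ shows that each $T(\chf{x}\otimes e)$ lies in $\roecstar{X;\CH}$ and is therefore approximable; a gliding-hump argument then rules out ``spread-out'' off-diagonal corners, in the sense that if there were sets $A_n,B_n$ with $d_X(A_n,B_n)\to\infty$ and $\norm{\chf{B_n}T\chf{A_n}}$ uniformly bounded below, one could assemble them into a single locally compact controlled-propagation $s$ for which $Ts$ fails to be approximable, contradicting the idealizer property. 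This forces $\norm{\chf{B}T\chf{A}}$ to be small whenever $d_X(A,B)$ is large, i.e.\ quasi-locality.

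The \textbf{main obstacle} is the final upgrade from quasi-locality to genuine approximability. For arbitrary bounded operators these two notions are known to differ unless $X$ has property~A, and one cannot simply truncate $T$ to a finite band, since band truncations are not uniformly bounded in general. The resolution must exploit that the multiplier hypothesis is \emph{strictly} stronger than quasi-locality: feeding $T$ carefully chosen elements of $\roecstar{X;\CH}$ with tailored finite supports, and using local compactness of the resulting products, one should extract honest finite-propagation approximants of $T$ to within any prescribed $\varepsilon>0$, uniformly in $x\in X$. Converting this qualitative multiplier condition into a quantitative, uniform-in-$x$ propagation bound is the delicate step, and it is here that the bulk of the work --- and the dependence on the structural machinery developed for these algebras --- resides.
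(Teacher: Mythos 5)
First, a point of reference: the paper does not prove this statement at all --- it is imported from \cite{braga_gelfand_duality_2022}*{Proposition 4.1} (with only a passing remark that it could alternatively be deduced from \cref{thm: refined rigidity} when \(\CH\) is infinite dimensional), so the comparison is with that source's argument. Your identification of \(\CM(\roecstar{X; \CH})\) with the idealizer inside \(\ops{X}\) is correct (nondegeneracy holds for the reason you give), and your proof of the inclusion \(\cpcstar{X; \CH}\subseteq\CM(\roecstar{X; \CH})\) is sound: subadditivity of propagation plus finiteness of \(\overline{B}(x;R_1)\) gives local compactness of the products on the dense \Star subalgebras, and norm-continuity of multiplication passes this to the closures.

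The reverse inclusion, however, has a genuine gap, and the route you choose makes it harder to close than necessary. You propose to show that an idealizing \(T\) is quasi-local and then ``upgrade'' to approximability; but, as you yourself observe, quasi-locality does not imply approximability in general (that implication is the quasi-locality problem, known only under property~A), so stopping at quasi-locality proves strictly less than the theorem, and your final paragraph describes the difficulty rather than resolving it. The standard proof bypasses quasi-locality entirely: for any zero-propagation finite-rank projection \(p=\sum_i \chf{x_i}\otimes q_i\) one has \(p\in\roecstar{X; \CH}\), hence \(Tp\in\roecstar{X; \CH}\) is \emph{honestly} approximable, not merely quasi-local. The two real steps are then (a) a uniformization statement --- for every \(\varepsilon>0\) there is a single \(R\) such that \(Tp\) is \(\varepsilon\)-\(R\)-approximable for \emph{all} such \(p\) simultaneously, obtained by a Baire-category or gliding-hump argument applied to the family \(\{Tp\}\) --- and (b) the fact that the set of \(\varepsilon\)-\(R\)-approximable operators in a bounded ball is closed in the strong operator topology (cf.\ the proof of \cite{braga_gelfand_duality_2022}*{Proposition 3.7}, invoked at the end of the proof of \cref{thm: refined rigidity}), so that letting \(p\to 1\) strongly yields that \(T\) itself is \(\varepsilon\)-\(R\)-approximable and hence lies in \(\cpcstar{X; \CH}\). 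Neither (a) nor (b) appears in your proposal, and without them the hard inclusion is not established.
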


\begin{theorem}[cf.\ \cite{braga_gelfand_duality_2022}*{Proposition 2.1} or \cite{roe-algs}*{Theorem 6.20}]\label{thm: intersection}
    If \(X\) is a uniformly locally finite metric space, then
    \[
        \roecstar{X; \CH} = \cpcstar{X; \CH} \cap \{\text{locally compact operators}\}.
    \]
\end{theorem}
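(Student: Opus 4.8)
The plan is to prove the two inclusions separately; the first is routine and the second carries all the weight.

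For the inclusion $\roecstar{X; \CH}\subseteq\cpcstar{X; \CH}\cap\{\text{locally compact}\}$, first observe that the generators of $\roecstar{X; \CH}$ are locally compact operators of controlled propagation, hence in particular generators of $\cpcstar{X; \CH}$; this gives $\roecstar{X; \CH}\subseteq\cpcstar{X; \CH}$. For the locally compact half, I would check that the set $\CL$ of locally compact operators is a norm-closed self-adjoint subalgebra of $\ops{X}$: it is a self-adjoint subspace, it is closed under products because $\chf{x}(st)=(\chf{x}s)t$ and $(st)\chf{x}=s(t\chf{x})$ are compact whenever $s,t\in\CL$, and it is norm-closed since the compacts are. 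As the generators of $\roecstar{X; \CH}$ lie in $\CL$, so does the whole algebra.

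The reverse inclusion is the substance. The structural fact I would lean on is that $\roecstar{X; \CH}$ is a closed two-sided self-adjoint ideal of $\cpcstar{X; \CH}$: this is immediate from \cref{thm: multiplier}, since an algebra is always an ideal in its multiplier algebra, but it can also be seen directly, as the product of a locally compact operator of controlled propagation with any operator of controlled propagation is again locally compact of controlled propagation, hence a generator. Granting the ideal structure, standard $C^*$-theory yields the criterion that for $t\in\cpcstar{X; \CH}$ one has $t\in\roecstar{X; \CH}$ if and only if $\norm{(1-u_\lambda)t}\to 0$ along some (equivalently any) approximate unit $(u_\lambda)$ of the ideal $\roecstar{X; \CH}$. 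The task thus reduces to producing an approximate unit along which a locally compact $t$ is approximated \emph{in norm}.

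For the approximate unit I would use block-diagonal finite-rank projections $u=\bigoplus_{x\in X}q_x$ with each $q_x$ a finite-rank projection of $\CH$, ordered by $u\le u'$ iff $q_x\le q_x'$ for all $x$; each such $u$ lies in $\ell^\infty(X;\CK(\CH))\subseteq\roecstar{X; \CH}$ and has propagation $0$. Given a locally compact $t$ and $\varepsilon>0$, compactness of each row $\chf{x}t$ supplies finite-rank $q_x$ with $\norm{(1-q_x)\chf{x}t}\le\varepsilon$, so the resulting $u_\varepsilon$ controls every row of $(1-u_\varepsilon)t$ by $\varepsilon$. To upgrade this row-wise bound to an operator-norm bound I would use approximability: choosing $s$ with $\Prop{s}\le R$ and $\norm{t-s}\le\delta$ confines the rows of $(1-u_\varepsilon)s$ to the band of width $R$, and since $\sum_y\norm{\chf{\overline{B}(y;R)}\xi}^2\le N(R)\norm{\xi}^2$ by uniform local finiteness, an orthogonality estimate gives $\norm{(1-u_\varepsilon)t}\le\sqrt{N(R)}\,(\varepsilon+\delta)+\delta$.

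The main obstacle is exactly the tension in this final estimate: the overlap constant $\sqrt{N(R)}$ grows with the band width $R$, whereas the approximation error $\delta=\delta(R)$ decreases only as $R\to\infty$, so the naive product $\sqrt{N(R)}\,\delta(R)$ need not vanish. Overcoming this requires choosing the approximate unit and the cut-off parameters in a coordinated fashion, decoupling the per-row (and per-column) finite-rank compression—which local compactness makes genuinely available, since each row and column of $t$ is honestly compact—from the band width, so that the error accumulated over the boundedly many overlapping bands still tends to $0$. I expect this bookkeeping, rather than any single clean inequality, to be the crux, and it is precisely the content packaged together with the approximate-unit statement in \cite{roe-algs}*{Theorem 6.20}.
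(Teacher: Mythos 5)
The paper never proves this statement---it is imported verbatim from \cite{braga_gelfand_duality_2022}*{Proposition 2.1} and \cite{roe-algs}*{Theorem 6.20}---so there is no internal proof to compare against; I am therefore assessing your argument on its own terms. Your first inclusion is fine, and your structural reductions are all correct: the locally compact operators form a norm-closed self-adjoint subalgebra containing the generators of $\roecstar{X; \CH}$; the Roe algebra is a closed two-sided ideal of $\cpcstar{X; \CH}$ (your direct verification on generators is the right way to avoid circularity through \cref{thm: multiplier}); and the reduction to showing $\norm{(1-u)t}\to 0$ along block-diagonal finite-rank projections is exactly the right skeleton. But the proof has a genuine gap, and it is the one you flag yourself: the estimate $\norm{(1-u_\varepsilon)t}\leq\sqrt{N(R)}\,(\varepsilon+\delta)+\delta$ contains the term $\sqrt{N(R)}\,\delta$, in which the overlap constant and the approximation error are coupled in the wrong direction, so the bound cannot be made small. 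As written this step fails, and no ``coordinated bookkeeping'' of the band width against the cut-off will rescue an estimate of this shape; deferring the crux to the cited reference leaves the proof incomplete exactly where the work is.

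The missing idea is to abandon the uniform per-row tolerance, and with it the entire band/overlap apparatus. Local compactness lets you compress each row with \emph{any prescribed, $x$-dependent} accuracy: since $X$ is countable, choose $\varepsilon_x>0$ with $\sum_{x\in X}\varepsilon_x^2\leq\varepsilon^2$, and for each $x$ a finite-rank projection $q_x\leq\chf{x}$ with $\norm{(\chf{x}-q_x)\chf{x}t}\leq\varepsilon_x$. The operators $(\chf{x}-q_x)t$ have pairwise orthogonal ranges, so for every $\xi\in\elltwo{X}$
\[
\norm{(1-u)t\,\xi}^2=\sum_{x\in X}\norm{(\chf{x}-q_x)\chf{x}t\,\xi}^2\leq\Bigparen{\sum_{x\in X}\varepsilon_x^2}\norm{\xi}^2,
\]
whence $\norm{(1-u)t}\leq\varepsilon$ outright---no propagation bound, no $N(R)$, and indeed no approximability of $t$ enter this estimate. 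Approximability is needed only to place $ut$ in $\roecstar{X; \CH}$ via the ideal property you already established, after which $t=\lim ut\in\roecstar{X; \CH}$. With this one substitution your outline becomes a complete proof; without it, the central inequality does not close.
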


The following is the first result of interest to us. For it, we implicitly use implementing unitaries to see both $\aut(\roecstar{X; \CH})$ and $\aut(\cpcstar{X; \CH})$ as subgroups of the unitary group of $\ops X$.
\begin{proposition} \label{prop: aut and out are the same}
    If \(X\) is a uniformly locally finite metric space, then
    \[
        \aut(\roecstar{X; \CH})=\aut(\cpcstar{X; \CH}).
    \]
    In particular, $\out(\roecstar{X; \CH})=\out(\cpcstar{X; \CH})$.
\end{proposition}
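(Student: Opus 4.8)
The plan is to phrase everything in terms of implementing unitaries and to prove the sharper statement that a unitary $U \in \ops X$ satisfies $\Ad(U)(\roecstar{X; \CH}) = \roecstar{X; \CH}$ \emph{if and only if} $\Ad(U)(\cpcstar{X; \CH}) = \cpcstar{X; \CH}$. Under the identification of automorphisms with their implementing unitaries, this equivalence is exactly the asserted equality $\aut(\roecstar{X; \CH}) = \aut(\cpcstar{X; \CH})$. The ``in particular'' clause about $\out$ then comes for free: by \cref{thm: multiplier} the inner automorphisms of $\roecstar{X; \CH}$ (i.e.\ those implemented by unitaries of the multiplier algebra $\cpcstar{X; \CH}$) are precisely the inner automorphisms of the \emph{unital} algebra $\cpcstar{X; \CH}$, so the two quotients defining $\out$ coincide.

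For the forward implication I would argue via the idealiser. Since $\roecstar{X; \CH}$ contains all compact operators it acts nondegenerately on $\elltwo X$, so by \cref{thm: multiplier} the algebra $\cpcstar{X; \CH} = \CM(\roecstar{X; \CH})$ is concretely the idealiser $\{T \in \ops X : T\,\roecstar{X; \CH},\ \roecstar{X; \CH}\,T \subseteq \roecstar{X; \CH}\}$. If $\Ad(U)$ preserves $\roecstar{X; \CH}$ then, being an automorphism of the ambient algebra $\ops X$, it preserves this idealiser, i.e.\ $\Ad(U)(\cpcstar{X; \CH}) = \cpcstar{X; \CH}$.

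The converse is the heart of the matter. Assume $\Ad(U)$ preserves $\cpcstar{X; \CH}$; then by \cref{thm: implementation,thm: uniformization} both $U$ and $U^*$ are weakly approximately controlled. By \cref{thm: intersection} it suffices to show that $\Ad(U)$ maps locally compact operators in $\cpcstar{X; \CH}$ to locally compact operators. The key decay estimate is that, for every fixed $y \in X$, the set $\{x \in X : \norm{\chf x U^* \chf y} \ge \varepsilon\}$ is finite for each $\varepsilon > 0$, so that
\[
    \norm{\chf x U^* \chf y} \longrightarrow 0 \quad\text{as } x \to \infty
\]
(and symmetrically for $U$). Indeed, applying \cref{lem:approximationa are controlled} to $U^*$ with $A = \{y\}$ shows that this set has uniformly bounded diameter, hence is finite by local finiteness.

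Given the estimate, I would first verify the claim for block-diagonal operators $t = \sum_x \chf x k_x \chf x \in \ell^\infty(X; \CK(\CH))$: here $t U^* \chf y = \sum_x \chf x k_x (\chf x U^* \chf y)$ has pairwise orthogonal columns, each compact, with column norms bounded by $\norm{t}\,\norm{\chf x U^* \chf y} \to 0$, so $t U^* \chf y$, and hence $\Ad(U)(t)\chf y = U(t U^* \chf y)$, is compact; the same argument applied to $U$ handles $\chf y\, \Ad(U)(t)$, so $\Ad(U)(t)$ is locally compact. Since $\Ad(U)$ preserves $\cpcstar{X; \CH}$ and $\roecstar{X; \CH}$ is the closed ideal of $\cpcstar{X; \CH}$ generated by the block-diagonal algebra $\ell^\infty(X; \CK(\CH))$ (a decomposition that uses uniform local finiteness to split the propagation band into finitely many partial bijections), it follows that $\Ad(U)(\roecstar{X; \CH}) \subseteq \roecstar{X; \CH}$; applying the same argument to $U^*$ yields the reverse inclusion, hence equality. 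The step I expect to be the main obstacle is precisely this preservation of local compactness: a priori $\Ad(U)$ has no reason to respect the locally compact operators, and the only available leverage is the coarse-like behaviour of $U$ from \cref{thm: uniformization}. The orthogonality of columns in the block-diagonal case is exactly what upgrades the merely pointwise decay $\norm{\chf x U^* \chf y} \to 0$ into genuine compactness, which is why reducing to $\ell^\infty(X; \CK(\CH))$ is essential.
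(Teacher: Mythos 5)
Your overall architecture coincides with the paper's: the forward containment via the multiplier/idealiser description of $\cpcstar{X;\CH}$ from \cref{thm: multiplier}, and the converse by reducing, through \cref{thm: intersection}, to showing that $\Ad(U)$ preserves local compactness, using that $U$ and $U^*$ are weakly approximately controlled. However, the compactness step contains a genuine gap. You establish only the \emph{pointwise} decay $\norm{\chf{x}U^*\chf{y}}\to 0$ (applying \cref{lem:approximationa are controlled} to singletons) and then infer that $tU^*\chf{y}=\sum_x \chf{x}k_x\chf{x}U^*\chf{y}$ is compact because the summands are compact, have pairwise orthogonal ranges, and have norms tending to $0$. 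That inference is false: the summands all share the initial space $\ell^2(\{y\};\CH)$, so the norm of a tail $\sum_{x\notin F}\chf{x}k_x\chf{x}U^*\chf{y}=t\chfcX{F}U^*\chf{y}$ is not controlled by $\sup_{x\notin F}\norm{\chf{x}k_x\chf{x}U^*\chf{y}}$, and when $\CH$ is infinite dimensional the operator norms $\norm{\chf{x}U^*\chf{y}}$ need not be square-summable. Concretely, fix a unit vector $h\in\CH$, orthonormal vectors $(u_k)_{k\in\NN}$ in $\CH$, and pairwise disjoint sets $B_k\subseteq X$ with $\abs{B_k}=k$; the isometry $V$ determined by $V(u_k)=k^{-1/2}\sum_{x\in B_k}\delta_x\otimes h$ satisfies $\norm{\chf{x}V}\le k^{-1/2}\to 0$, yet for the block-diagonal rank-one-per-block projection $t=\sum_x \chf{x}\otimes p_h\in\ell^\infty(X;\CK(\CH))$ one has $tV(u_k)=V(u_k)$ for all $k$, so $tV$ is an isometry on an infinite-dimensional subspace and is not compact. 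So the properties you invoke do not suffice, and the ``upgrade to genuine compactness'' you describe in your last sentence does not go through.

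The repair is exactly the point where the paper's proof differs from yours: \cref{lem:approximationa are controlled} applies to arbitrary sets, not only singletons. Having found a finite $B\subseteq X$ with $\norm{\chf{B}U^*\chf{y}}\ge 1/2$, one applies the lemma with $C=B$ and $C'=X\smallsetminus N_R(B)$ to obtain the \emph{tail-block} decay $\norm{\chfcX{N_R(B)}U^*\chf{y}}<\varepsilon$ for $R$ large; then $tU^*\chf{y}$ lies within $\norm{t}\varepsilon$ of the compact operator $t\chf{N_R(B)}U^*\chf{y}$, and compactness follows. (In the counterexample above this stronger decay fails, since $\norm{\chf{B_k}V}=1$ for all $k$; this is what weak approximate controlledness actually forbids.) Note that with the tail-block estimate in hand you no longer need the reduction to $\ell^\infty(X;\CK(\CH))$ and the ideal-generation argument: for \emph{any} locally compact $t$ the operator $t\chf{N_R(B)}$ is already compact because $N_R(B)$ is finite, which is how the paper concludes. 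The forward containment via the idealiser and the identification of the two $\out$ quotients are fine and essentially as in the paper.
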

\begin{proof}
    The containment $\aut(\roecstar{X; \CH})\subseteq \aut(\cpcstar{X; \CH})$ is an immediate consequence of \cref{thm: multiplier}: an automorphism of $\roecstar X$ must extend to its multiplier algebra (see, \emph{e.g.}\ \cite{blackadar2006operator}*{II.7.3.9}), and these automorphisms must be implemented by the same unitary $U\in\ops X$.

    For the converse containment, it follows from \cref{thm: intersection} that it is enough to show that if $U$ implements an automorphism of $\cpcstar{X; \CH}$, then $\Ad(U)$ must preserve local compactness. As before, note that $U$ and $U^*$ are weakly approximately controlled by \cref{thm: uniformization}.

    Let $A\subseteq X$ be an arbitrary non-empty finite set.
    Exhausting $X$ by larger and larger finite sets, we may find some finite set $B\subseteq X$ such that $\norm{\chf B U^*\chf A}\geq 1/2$. An application of \cref{lem:approximationa are controlled} shows that for every $\varepsilon>0$ there is an $R\geq 0$ large enough such that
    \[
    \norm{\chfcX{N_R(B)}U^*\chf A} < \varepsilon,
    \]
    where $N_R(B)$ denotes the $R$-neighborhood of $B$.
    If $t\in\ops B$ is a locally compact operator, we deduce that
    \[
     \Ad(U)(t)\chf A = UtU^*\chf A
     =\lim_{R\to \infty} U \paren{t\chf{N_R(B)} } U^*\chf A
    \]
    is compact, as it is the limit of compact operators. We may analogously show that $\chf A UtU^*$ is compact as well. Since $A$ and $t$ are arbitrary, this proves that $\Ad(U)$ preserves local compactness.
\end{proof}

\begin{remark}
    The containment $\aut(\roecstar{X; \CH})\subseteq \aut(\cpcstar{X; \CH})$ is also noted in \cite{braga_gelfand_duality_2022}*{Corollary 4.3}. See \cite{braga_gelfand_duality_2022}*{Remark 4.4} for an argument not using \cref{thm: multiplier}.
\end{remark}

An operator $T\colon \elltwo{X}\to \elltwo{Y}$ is \emph{coarsely supported} on a function $f\colon X\to Y$ if and only if there is a constant $R\geq 0$ such that $\chf y T\chf x \neq 0$ only if $d_Y(f(x),y)\leq R$. If it is important to keep track of the specific constant, we will say that $T$ is \emph{$R$-supported on $f$}. 
The following refinement of \cref{thm:intro:rigidity} is the main technical result of this section.

\begin{theorem}\label{thm: refined rigidity}
    Suppose $\CH$ is infinite dimensional and $U\colon \elltwo{X}\to \elltwo{Y}$ is a unitary implementing an isomorphism of $\roecstar{\variable; \CH}$ (equivalently, $\cpcstar{\variable; \CH}$). Let $f\colon X\to Y$ be a coarse equivalence constructed as in the proof of \cref{thm:intro:rigidity}.
    Then $U$ is a norm limit of operators that are coarsely supported on $f$.
\end{theorem}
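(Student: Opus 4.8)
The plan is to realise $U$ as the norm limit of its own truncations to a growing neighbourhood of the graph of $f$. For $S\geq 0$ set
\[
    U_S \coloneqq \sum_{x\in X}\chf{\overline{B}(f(x);S)}\,U\,\chf x .
\]
By construction $\chf y U_S\chf x=\chf y\chf{\overline{B}(f(x);S)}U\chf x$ vanishes unless $d_Y(f(x),y)\leq S$, so $U_S$ is $S$-supported on $f$. It is moreover a bounded operator, because $f$ has bounded multiplicity at every scale: its coarse inverse $g$ from \eqref{eq: g function} converts a bound on $d_Y(f(x),f(x'))$ into a bound on $d_X(x,x')$, and then uniform local finiteness of $X$ caps the number of columns whose image meets a fixed ball $\overline{B}(y;S)$ (and symmetrically for rows). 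It therefore suffices to prove that $\norm{U-U_S}\to 0$ as $S\to\infty$.

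The first ingredient is a uniform corner concentration of the columns of $U$ near $f(x)$. Fix the constants $\delta$ and $R$ coming from the construction of $f$, so that $\norm{\chf{f(x)}U\chf{\overline{B}(x;R)}}>\delta$ for all $x$ by \eqref{eq: f function}. Writing $C_x\coloneqq\charfunccomp{Y}{\overline{B}(f(x);S)}$ for the far region, I would apply \cref{lem:approximationa are controlled} to $U$ with the set $A=\overline{B}(x;R)$ (of uniformly bounded diameter) and the two regions $\{f(x)\}$ and $C_x$: if $\norm{\chf{C_x}U\chf{\overline{B}(x;R)}}\geq\delta$, then this lemma forces $d_Y(C_x,f(x))\leq R'(\delta,R)$ for a constant independent of $x$. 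Since $d_Y(C_x,f(x))>S$, choosing $S\geq R'(\delta,R)$ yields $\norm{\chf{C_x}U\chf{\overline{B}(x;R)}}<\delta$, and hence $\norm{\chf{C_x}U\chf x}<\delta$ uniformly in $x$. As $\chf x$ has propagation $0$, weak approximability of $U$ (\cref{thm: uniformization}, placing us in the $\cpcstar{\variable;\CH}$ picture via \cref{prop: aut and out are the same}) also makes $P_x\coloneqq U\chf x U^*=\Ad(U)(\chf x)$ a quasi-local projection; the $P_x$ are mutually orthogonal with $\sum_{x}P_x=\id$, and $\norm{\chf{C_x}P_x}=\norm{\chf{C_x}U\chf x}<\delta$.

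The main difficulty — and the step that genuinely uses the bounded geometry of $X$ and $Y$ — is to upgrade this entrywise/corner control to an operator-norm bound on $U-U_S$. I would pass through a $TT^*$ computation: using $\chf x\chf{x'}=\delta_{x,x'}\chf x$ one gets
\[
    (U-U_S)(U-U_S)^* = \sum_{x\in X}\chf{C_x}\,P_x\,\chf{C_x},
\]
a sum of positive operators built from the orthogonal projections $P_x$. Column concentration makes each summand small once $S\geq R'(\delta,R)$, while the overlaps are controlled by a Cotlar--Stein estimate: the products $\bigparen{\chf{C_x}P_x\chf{C_x}}\bigparen{\chf{C_{x'}}P_{x'}\chf{C_{x'}}}$ vanish except for the defect caused by the intervening cut-offs, which is supported on $\overline{B}(f(x);S)\cup\overline{B}(f(x');S)$, so that the remaining cross terms decay through quasi-locality of the $P_x$ whenever $f(x')$ is far from $f(x)$. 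Summing $\sup_x\sum_{x'}\norm{\bigparen{\chf{C_x}P_x\chf{C_x}}\bigparen{\chf{C_{x'}}P_{x'}\chf{C_{x'}}}}^{1/2}$, the boundedly many terms with $f(x')$ close to $f(x)$ are each of size $O(\delta)$ by bounded multiplicity, and the rest are small by quasi-locality. The delicate point — the place where uniform local finiteness and the controlledness of $f$ must be balanced against one another — is to arrange $S$ (equivalently $\delta$) large enough that this sum is small despite the growth of the multiplicity with $S$; this is what I expect to be the real obstacle. Granting it, $\norm{U-U_S}\to 0$, so $U$ is a norm limit of operators coarsely supported on $f$, as claimed.
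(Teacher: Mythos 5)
Your reduction to the inequality $\norm{U-U_S}\to 0$ for the truncations $U_S=\sum_{x\in X}\chf{\overline{B}(f(x);S)}U\chf{x}$ is where the argument breaks down, and the obstacle you flag in your last sentence is not a residual technicality but the entire content of the theorem. The identity $(U-U_S)(U-U_S)^*=\sum_x\chf{C_x}P_x\chf{C_x}$ and the uniform column bound $\sup_x\norm{\chf{C_x}U\chf{x}}\to 0$ as $S\to\infty$ are both correct, but a sum of infinitely many positive operators each of norm at most $\varepsilon_S^2$ carries no a priori norm bound, and the Cotlar--Stein estimate you propose cannot close the gap: writing $T_x=\chf{C_x}U\chf{x}$, the off-diagonal terms $\norm{T_x^*T_{x'}}=\norm{\chf{x}\,\Ad(U^*)(\chf{C_x\cap C_{x'}})\,\chf{x'}}$ decay in $d_X(x,x')$ only through the quasi-locality modulus of $\Ad(U^*)$, and \cref{thm: uniformization} provides no quantitative rate for that modulus, whereas the number of points $x'$ with $d_X(x,x')\leq r$ may grow arbitrarily fast (e.g.\ exponentially) in $r$. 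Hence $\sup_x\sum_{x'}\norm{T_x^*T_{x'}}^{1/2}$ need not be finite, let alone small, and enlarging $S$ does not help since it also enlarges the multiplicity constant. Note also that you are attempting a strictly stronger statement than the theorem: the theorem asserts that $U$ is a norm limit of \emph{some} coarsely supported operators, not of its own truncations; even granting the theorem, a Schur-type estimate only yields $\norm{U-U_S}\leq(1+M_S^{1/2})\,\varepsilon$ with $M_S=\sup_y\abs{\{x\mid y\in\overline{B}(f(x);S)\}}$, which is not a bound tending to zero.

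The paper avoids the summation problem entirely, and the way it does so explains why the hypothesis that $\CH$ be infinite dimensional --- which your argument never uses --- is essential. For a finite-rank projection $p=p_{x_1}+\cdots+p_{x_n}$ with $p_{x_i}\leq\chf{x_i}$, one first twists $U$ by a propagation-zero unitary $V$ built from unitaries $V_i\in\CB(\CH)$ chosen inductively so that the finitely many tails $\chf{C_i}U(\chf{x_i}\otimes V_i)p_{x_i}$ are mutually orthogonal (this is exactly where infinite-dimensionality enters, cf.\ \eqref{eq:h inf dim for this}); the norm of their sum is then the \emph{maximum} of the individual norms rather than an uncontrolled sum, which gives an $R$-supported $t$ with $\norm{t-UVp}\leq\varepsilon$. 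The twist is then undone by approximating $UV^*U^*$ with a controlled-propagation contraction, and the passage from finite-rank $p$ to $1$ uses that the set of operators within $\varepsilon$ of an $R$-supported operator is SOT-closed. To salvage your route you would need an unconditional bound on $\norm{\sum_x\chf{C_x}U\chf{x}}$ in terms of $\sup_x\norm{\chf{C_x}U\chf{x}}$, which is precisely what the orthogonalization trick is designed to circumvent.
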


\begin{proof}
    Let $p\in\ops X$ be a finite rank projection of the form 
    \begin{equation}\label{eq:fin rk proj}
        p = p_{x_1}+\cdots +p_{x_n}
    \end{equation}
    where the $x_i\in X$ are distinct points and $p_{x_i}\leq \chf{x_i}$ is a projection onto some finite dimensional vector subspace $E_{i}\leq \CH \cong \chf{x_i}(\elltwo{X})$. Observe that such a $p$ has zero propagation.
    The following claim is the key place where we use the assumption that $\CH$ be infinite dimensional.

    \begin{claim}\label{claim: upgrade trick}
        For every $\varepsilon>0$ there is some $R\geq 0$ such that for every $p$ as in \eqref{eq:fin rk proj} there exist a unitary operator $V \in \ops X$ of propagation zero and an operator $t\in\ops X$ that is $R$-supported on $f$ satisfying $\norm{t-UVp}\leq \varepsilon$.
    \end{claim}
    \begin{proof}[Proof of \cref{claim: upgrade trick}]
        By the construction of $f$, there are $r\geq 0$ and $\delta>0$ such that $\norm{\chf{f(x)}U\chf{\overline{B}(x;r)}} > \delta$ for all \(x \in X\). We may assume that $\varepsilon<\delta$, and apply \cref{lem:approximationa are controlled} on norms of the form $\norm{\chf A U \chf x}\leq \norm{\chf{A} U\chf{\overline{B}(x;r)}}$ to deduce that there is some $R\geq 0$ large enough so that
        \begin{equation}\label{eq:bound on corner}
         \norm{\chfcX{\overline{B}(f(x); R)} U \chf x}\leq \varepsilon 
        \end{equation}
        for every $x\in X$.

        Recall that \(p = p_{x_1} + \dots + p_{x_n}\). In the following, with a slight abuse of notation, we are using \(\chf{x_i}\) to denote both a projection in \(\CB(\elltwo{X})\) and in $\CB(\ell^2(X))$.
        For each $i=1,\ldots, n$, let $C_i\coloneqq X\smallsetminus \overline{B}(f(x_i); R)$. We inductively construct unitary operators $V_i\in \CB(\CH)$ as follows. For a fixed $i$, consider the finite dimensional subspace 
        \[
        F_i\coloneqq \angles{U^*\chf{C_i}\chf{C_j}U(\chf{x_j} \otimes V_j)(E_{j})\mid 1\leq j \leq i}\leq \elltwo{X};
        \]
        and define $V_i\in \CB(\CH)$ by arbitrarily choosing a unitary operator such that $V_i(E_{i})$ is orthogonal to $\chf{x_i}(F_i)$. Namely, $V_i$ is chosen so that
        \begin{equation} \label{eq:h inf dim for this}
        p_{x_i}(\chf{x_i} \otimes V_i^*)(F_i) = \{0\}.
        \end{equation}

        Consider now the partial isometries $\chf{x_i}\otimes V_i\in\ops X$. We claim that the operators $\chf{C_i}U(\chf{x_i} \otimes V_i)p_{x_i}$ are orthogonal to one another as $i=1,\ldots,n$ varies.
        In fact, it is clear that for every $j< i$ 
        \[
            \chf{C_{j}}U(\chf{x_{j}} \otimes V_{j})p_{x_{j}}\bigparen{\chf{C_{i}}U(\chf{x_{i}} \otimes V_{i})p_{x_{i}}}^*
            = \chf{C_{j}}U(\chf{x_{j}} \otimes V_{j})p_{x_{j}}p_{x_{i}}(\chf{x_{i}} \otimes V_{i}^*)U^*\chf{C_{i}}
            =0
        \]
        (since \(p_{x_{j}} p_{x_{i}} = 0\) when \(i \neq j\)), and 
        \[
            \bigparen{\chf{C_{i}}U(\chf{x_{i}} \otimes V_{i})p_{x_{i}}}^*\chf{C_{j}}U(\chf{x_{j}} \otimes V_{j})p_{x_{j}}
            = p_{x_{i}}(\chf{x_{i}} \otimes V_{i}^*)\bigparen{U^*\chf{C_{i}}\chf{C_{j}}U(\chf{x_{j}} \otimes V_{j})p_{x_{j}}} = 0
        \]
        (from the choice of \(V_i\), see \eqref{eq:h inf dim for this}).
        
        Observe that
        \[
          V \coloneqq \sum_{i=1}^n V_i\otimes\chf{x_i} + \chfcX{\{x_1,\ldots, x_n\}},
        \]
        is a unitary operator of propagation zero. Moreover, let 
        \[
            t\coloneqq \sum_{i=1}^n \chf{\overline{B}(f(x_i);R)}U V {p_{x_i}}.
        \]
        We claim that $V$ and $t$ satisfy our requirements. It is evident that $t$ is $R$-supported on $f$, and we see that
        \[
        \norm{t-UVp}
        =\norm{\sum_{i=1}^n \chf{\overline{B}(f(x_i);R)}U V {p_{x_i}} - U V p_{x_i} }
        =\norm{\sum_{i=1}^n \chf{C_i}U (\chf{x_i} \otimes V_i){p_{x_i}}}.
        \]
        Since these operators are orthogonal by construction, this norm is equal to the maximum of $\norm{\chf{C_i}U (\chf{x_i} \otimes V_i){p_{x_i}}} $, which is at most $\varepsilon$ (cf.\ \eqref{eq:bound on corner}).
    \end{proof}

    Fix now $\varepsilon >0$.
    Choose a net $(p_\lambda)_{\lambda \in \Lambda}$ of the form \eqref{eq:fin rk proj} that converges strongly to $1\in\ops X$. Apply \cref{claim: upgrade trick} to obtain an $R_1\geq 0$, unitaries $(V_\lambda)_{\lambda \in \Lambda}$ and operators $(t_\lambda)_{\lambda \in \Lambda}$ that are $R$-supported on $f$ and such that  $\norm{t_\lambda-UV_\lambda p_\lambda}\leq \varepsilon/2$ for every $\lambda \in \Lambda$.
    Since $U$ is weakly approximately controlled and $(V_\lambda)_{\lambda \in \Lambda}$ all have propagation $0$, there is also an $R_2>0$ such that for every $\lambda \in \Lambda$ there is some $s_\lambda\in\ops X$, whose propagation is bounded by $R_2$, such that $\norm{U V_\lambda^* U^*-s_\lambda}\leq \varepsilon/2$.

    For convenience, we may also impose that each $s_\lambda$ be a contraction. It then follows from the triangle inequality that
    \[
    \norm{Up_\lambda- s_\lambda t_\lambda} 
    = \norm{\paren{UV_\lambda^* U^*}\paren{UV_\lambda p_\lambda} - s_\lambda t_\lambda}\leq \varepsilon.
    \]
    Letting $R\coloneqq R_1+R_2$, observe that the operator $s_\lambda t_\lambda$ is $R$-supported on $f$.
    Since $Up_\lambda$ converges to $U$ in the strong operator topology, this shows that $U$ is the strong limit of operators that are within distance $\varepsilon$ from operators that are $R$-supported on $f$.
    As the set
    \[
    \braces{T\in\ops X\mid \exists T'\in\ops X \text{ $R$-supported on $f$ with } \norm{T-T'}\leq \varepsilon}
    \]
    is closed in the strong operator topology (see the proof of \cite{braga_gelfand_duality_2022}*{Proposition 3.7}), it follows that $U$ itself can be $\varepsilon$-approximated with an operator that is $R$-supported on $f$. 
\end{proof}

\begin{remark}
    It is not hard to use \cref{thm: refined rigidity} to prove \cref{thm: multiplier} (under the assumption that $\CH$ be infinite dimensional).
\end{remark}

Let again $\CH$ be infinite dimensional. As is well known, with any coarse equivalence $f\colon X\to Y$ one can associate a unitary $U_f\colon \elltwo{X}\to \elltwo{Y}$ coarsely supported on $f$ (such a $U_f$ is also said to \emph{cover} $f$, see \emph{e.g.}\ \cite{willett_higher_2020}*{Proposition 4.3.4}).
This is straightforward to see if \(f\) is a \emph{bijective} coarse equivalence: then \(U_f(\delta_x \otimes h) \coloneqq \delta_{f(x)} \otimes h\) defines a well-behaved unitary covering \(f\) (this case even works if \(\CH\) is finite dimensional).
In general, one may find some coarsely dense \(X_0 \subseteq X\) such that the restriction of $f$ to $X_0$ is injective, and subsequently partition \(X\) and $Y$ into sets \(\{X(x_0)\}_{x_0 \in X_0}\) and \(\{Y(x_0)\}_{x_0 \in X_0}\) of uniformly bounded diameter such that \(x_0 \in X(x_0)\) and \(f(x_0) \in Y(x_0)\) for all \(x_0 \in X_0\).
In such setting, one may choose bijections \(g_{x_0} \colon X(x_0) \times \NN \to Y(f(x_0)) \times \NN\), since both these sets are countably infinite. For every \(x \in X(x_0)\), let \(g_{x_0,1}\left(x, n\right)\in Y(x_0)\) and \(g_{x_0,2}\left(x, n\right)\in \NN\) be the coordinates of \(g_{x_0}\left(x, n\right)\). Then the map
\[
  U_f \left(\delta_x \otimes e_n\right) \coloneqq \delta_{g_{x_0,1}\left(x, n\right)} \otimes e_{g_{x_0,2}\left(x, n\right)},
\]
where \(\{e_n\}_{n \in \NN} \subseteq \CH\) is any orthonormal basis and \(x_0 \in X_0\) is such that \(x \in X(x_0)\),
defines the desired unitary.

Note that the construction of \(U_f\) involves highly non-canonical choices (the co-bounded set \(X_0\), the partitions, the bijections\ldots). Nevertheless, it is not hard to show that different choices give rise to unitaries that only differ by composition with some unitary of controlled propagation. In turn, this shows that this procedure induces a canonically defined group homomorphism 
\[
    \begin{tikzcd}[row sep = 0, ]
        \tau \colon \coe X \arrow[r] & \out(\cpcstar X) \\
            \hspace{3ex} [f] \arrow[r,|->] & {[\Ad(U_f)]}
    \end{tikzcd},
\]
where 
\[
\coe X \coloneqq \braces{f\colon X\to X \text{ coarse equivalence}}/_\text{closeness}
\]
(this is a group under composition).
With a little more work, one can even show that $\tau$ is, in fact, injective (cf.\ \cite{braga_gelfand_duality_2022}*{Section 2.2} or \cite{roe-algs}*{Theorem 7.18}). Using \cref{thm: refined rigidity} we can now show that it is even an isomorphism, proving \cref{thm:intro:iso} in the introduction.

\begin{proof}[Proof of \cref{thm:intro:iso}]
    By the discussion above, all it remains to do is to check that
    \[
        \tau \colon \coe{X} \to \out\left(\roecstar{X}\right) = \out\left(\cpcstar{X}\right)
    \]
    is surjective.
    Let $U\in\ops X$ be a unitary implementing an automorphism (of \(\roecstar{X}\) or \(\cpcstar{X}\), see \cref{prop: aut and out are the same}). By \cref{thm:intro:rigidity}, we can construct an associated coarse equivalence $f\colon X\to Y$, and by \cref{thm: refined rigidity} there exists a sequence of operators $T_n$ that are coarsely supported on $f$ and converge to $U$ in norm.

    Fix some unitary $W\in \ops X$ coarsely supported on $f$. Observe that if $\chf {x'} T_nW^*\chf x\neq 0$ then there must be some $\bar x\in X$ such that  $\chf {x'} T_n\chf{\bar x}W^*\chf x\neq 0$.
    Since $W$ and $T_n$ are both coarsely supported on $f$, it follows that both $x'$ and $x$ are at uniformly bounded distance from $f(\bar x)$. In particular, $d_X(x',x)$ is uniformly bounded as \(x, x' \in X\) vary with \(\chf{x'} T_n W^* \chf{x} \neq 0\). This means precisely that for every fixed $n\in \NN$ the operator $T_nW^*$ has bounded propagation.
    Now we are done, because $U = (UW^*)W$ and 
    \[
        UW^* = \lim_{n\to \infty} T_nW^*
    \]
    is therefore a unitary in $\cpcstar X$, so $[\Ad(U)]=[\Ad(W)]$ (in \(\out(\roecstar{X})\)). Moreover, \([\Ad(W)]\) is in the image of $\tau$ by construction.
\end{proof}

\bibliography{BibRigidity}

\end{document}